\newcounter{minutes}\setcounter{minutes}{\time}
\newcounter{hours}\setcounter{hours}{\time}
\title{Modified Dini functions: monotonicity patterns and functional inequalities}
\author[\'A. Baricz]{\'Arp\'ad Baricz}
\address{Department of Economics, Babe\c{s}-Bolyai University, Cluj-Napoca 400591, Romania}
\address{Institute of Applied Mathematics, John von Neumann Faculty of Informatics, \'Obuda University, 1034 Budapest, Hungary}
\email{bariczocsi@yahoo.com}
\author[S. Ponnusamy]{Saminathan Ponnusamy}
\address{Indian Statistical Institute, Chennai Centre, Society for Electronic Transactions and Security,
MGR Knowledge City, CIT Campus, Taramani, Chennai 600113, India}
\email{samy@iitm.ac.in}
\author[S. Singh]{Sanjeev Singh}
\address{Department of Mathematics, Indian Institute of Technology Madras, Chennai 600036, India}
\email{sanjeevsinghiitm@gmail.com}
\newtheorem{theorem}{Theorem}
\newtheorem{lemma}{Lemma}
\newtheorem{corollary}[theorem]{Corollary}
\begin{document}

\def\thefootnote{}
\footnotetext{ \texttt{File:~\jobname .tex,
          printed: \number\year-0\number\month-\number\day,
          \thehours.\ifnum\theminutes<10{0}\fi\theminutes}
} \makeatletter\def\thefootnote{\@arabic\c@footnote}\makeatother

\keywords{Functional inequalities, Dini functions, Modified Bessel functions, Tur\'an type inequalities, Redheffer-type inequalities, infinite product representation, completely monotonic, log-convex functions.}
\subjclass[2010]{39B62, 33C10, 42A05.}

\maketitle


\begin{abstract}
We deduce some new functional inequalities, like Tur\'an type inequalities,
Redheffer type inequalities, and a Mittag-Leffler expansion for a special combination of modified Bessel
functions of the first kind, called modified Dini functions. Moreover, we
show the complete monotonicity of a quotient of modified Dini functions
by introducing a new continuous infinitely divisible probability distribution.
The key tool in our proofs is a recently developed infinite product representation
for a special combination of Bessel functions of the first, which was very useful in
determining the radius of convexity of some normalized Bessel functions of the first kind.
\end{abstract}

\section{\bf Introduction and preliminaries}
\setcounter{equation}{0}

Let us start with the Dini function $d_{\nu}:\mathbb{C}\rightarrow \mathbb{C}$ defined by
$$d_{\nu}(z)=(1-\nu)J_{\nu}(z)+zJ'_{\nu}(z)=J_{\nu}(z)-zJ_{\nu+1}(z),$$
which in view of $I_{\nu}(z)=\mathrm{i}^{-\nu}J_{\nu}(\mathrm{i}z)$ gives the modified Dini function $\xi_{\nu}:\mathbb{C}\to\mathbb{C},$ defined by
$$\xi_{\nu}(z)=\mathrm{i}^{-\nu}d_{\nu}(\mathrm{i}z)=(1-\nu)I_{\nu}(z)+zI'_{\nu}(z)
=I_{\nu}(z)+zI_{\nu+1}(z).$$
In view of the Weierstrassian factorization of $d_{\nu}(z)$ (see \cite{bpogsz}),
\begin{equation}\label{product}
d_{\nu}(z)=\frac{z^{\nu}}{2^{\nu}\Gamma(\nu+1)} \prod_{n\geq 1}\left(1-\frac{z^2}{\alpha_{\nu,n}^2}\right),
\end{equation}
where $\nu>-1,$ and the formula $\xi_{\nu}(z)=\mathrm{i}^{-\nu}d_{\nu}(\mathrm{i}z)$, we have the following Weierstrassian factorization of $\xi_{\nu}(z)$ for all $\nu>-1$ and $z\in\mathbb{C}$
\begin{equation}\label{product1}
\xi_{\nu}(z)=\frac{z^{\nu}}{2^{\nu}\Gamma(\nu+1)} \prod_{n\geq 1}\left(1+\frac{z^2}{\alpha_{\nu,n}^2}\right),
\end{equation}
where the infinite product is uniformly convergent on each compact subset of the complex plane. Here $\alpha_{\nu,n}$ is $n$th
positive zero of the Dini function $d_{\nu}$. Now for $\nu>-1$ define the function $\lambda_{\nu}:\mathbb{R}\rightarrow [1,\infty)$ as
\begin{equation}\label{product2}
\lambda_{\nu}(x)=2^{\nu}\Gamma(\nu+1)x^{-\nu}\xi_{\nu}(x)=\prod_{n\geq 1}\left(1+\frac{x^2}{\alpha_{\nu,n}^2}\right).
\end{equation}

By using some ideas from \cite{baricz1,baricz3,bwu,ismail}, in this paper our aim is to deduce some new functional inequalities, like Tur\'an type inequalities,
Redheffer type inequalities for the above special combination of modified Bessel functions of the first kind, called modified Dini functions. Moreover, we
show the complete monotonicity of a quotient of modified Dini functions by introducing a new continuous infinitely divisible probability distribution.
The key tool in our proofs is the above infinite product representation, which was very useful in
determining the radius of convexity of some normalized Bessel functions \cite{basz}.

Before we present our main results, we recall some standard definitions and basic facts. We say that a function $f:[a,b]\subseteq\mathbb{R}\rightarrow \mathbb{R}$ is convex if for all $x,y\in [a,b]$ and $\alpha\in [0,1]$ we have
$$f(\alpha x+(1-\alpha)y)\leq \alpha f(x)+(1-\alpha)f(y).
$$
If the above inequality is reversed then $f$ is called a concave function. Moreover it is known that if $f$ is differentiable,
then $f$ is convex (concave) if and only if $f'$ is increasing (decreasing) and if $f$ is twice differentiable, then $f$ is
convex (concave) if and only if $f''$ is positive (negative). A function $g:[a,b]\subseteq\mathbb{R}\rightarrow (0,\infty)$ is said to be log-convex if its
natural logarithm $\log g$ is convex, that is, for all $x,y\in [a,b]$ and $\alpha\in [0,1]$ we have
$$g(\alpha x+(1-\alpha)y)\leq (g(x))^{\alpha}(g(y))^{1-\alpha}.$$
If the above inequality is reversed then $g$ is called a log-concave function. It is also known that if $g$ is differentiable,
then $g$ is log-convex (log-concave) if and only if $g'/g$ is increasing (decreasing). A function $h:[a,b]\subseteq [0,\infty)\rightarrow [0,\infty)$ is said to be geometrically convex if it is convex with respect to the geometric mean, that is, if for all $x,y\in [a,b]$ and $\alpha \in [0,1]$ we have
$$h(x^{\alpha}y^{1-\alpha})\leq (h(x))^{\alpha}(h(y))^{1-\alpha}.$$
If the above inequality is reversed then $h$ is called a geometrically concave function. If $h$ is differentiable, then $h$ is
geometrically convex (concave) if and only if $x\mapsto xh'(x)/h(x)$ is increasing (decreasing). A probability distribution $\mu$ on the half line $(0,\infty)$ is infinitely divisible if for every $n\in\{1,2,\dots\}$ there exists a probability distribution
$\mu_n$ on $(0,\infty)$ such that for each $n\in\{1,2,\dots\}$
$$\int_0^{\infty}e^{-xt}d\mu=\left(\int_0^{\infty}e^{-xt}d\mu_n\right)^{n}.$$
A function $f:(0,\infty)\rightarrow \mathbb{R}$ possessing derivatives of all order is called a completely monotonic function if
$(-1)^nf^{(n)}(x)\geq 0$ for all $x>0$ and $n\in \{0,1,\dots\}$. The classes of completely monotonic and infinitely divisible distributions
are related by the following result, see Feller \cite[p. 425]{feller}.

\begin{lemma}\label{lem2}
The function $w$ is the Laplace transform of an infinitely
divisible probability distribution if and only if $w(x)=e^{-h(x)}$, where $h$ has a completely monotone derivative and $h(0)=0$.
\end{lemma}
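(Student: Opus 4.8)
The plan is to prove the two implications separately: the \emph{sufficiency} is short once Bernstein's theorem is available, whereas the \emph{necessity} is essentially the Lévy--Khintchine representation of nonnegative infinitely divisible laws and carries all of the work.

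For sufficiency I would start from $h(0)=0$ and $h'$ completely monotone, invoke Bernstein's theorem to write $h'(x)=\int_{[0,\infty)}e^{-xt}\,d\sigma(t)$ for a nonnegative measure $\sigma$, and then integrate from $0$ (using $h(0)=0$ and Fubini) to obtain
\[
h(x)=cx+\int_{(0,\infty)}\frac{1-e^{-xt}}{t}\,d\sigma(t),\qquad c=\sigma(\{0\})\ge 0 ,
\]
so that $h$ is nonnegative on $(0,\infty)$ with completely monotone derivative, i.e. a Bernstein function, and a dominated-convergence argument gives $h(0^+)=0$. Since $u\mapsto e^{-u}$ is completely monotone and a completely monotone function composed with a Bernstein function is again completely monotone, $w=e^{-h}$ is completely monotone, hence by Bernstein's theorem $w(x)=\int_{[0,\infty)}e^{-xt}\,d\mu(t)$ for a nonnegative $\mu$, and $w(0^+)=e^{-h(0^+)}=1$ makes $\mu$ a probability measure. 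Finally, for every $n\ge 1$ the function $h/n$ also vanishes at $0$ and has completely monotone derivative, so $w^{1/n}=e^{-h/n}$ is likewise the Laplace transform of a probability measure $\mu_n$; since Laplace transforms multiply under convolution, $w=(w^{1/n})^n$ forces $\mu=\mu_n^{\ast n}$, and $\mu$ is infinitely divisible.

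For necessity I would let $w$ be the Laplace transform of an infinitely divisible probability measure $\mu$ on $(0,\infty)$, note that $0<w\le 1$, $w(0)=1$ and $w>0$, so $h:=-\log w$ is smooth on $(0,\infty)$ with $h(0)=0$ and $w=e^{-h}$, and then try to show that $h'$ is completely monotone. Picking $\mu_n$ with $\widehat{\mu_n}=w^{1/n}$ and using $(1-u)/(-\log u)\to 1$ as $u\to 1^-$ together with $w^{1/n}(x)\to 1$, one gets
\[
h(x)=\lim_{n\to\infty}n\bigl(1-w^{1/n}(x)\bigr)=\lim_{n\to\infty}\int_0^\infty(1-e^{-xt})\,d\rho_n(t),\qquad \rho_n:=n\mu_n .
\]
Because $1-e^{-xt}$ is bounded away from $0$ on $[\varepsilon,\infty)$ and comparable to $t$ near the origin, the finiteness of $h$ forces $\sup_n\rho_n([\varepsilon,\infty))<\infty$ and $\sup_n\int_0^\varepsilon t\,d\rho_n(t)<\infty$ for every $\varepsilon>0$; a Helly-type compactness argument with diagonal extraction, together with the continuity of $w$ at $0$ (which prevents mass of $\rho_n$ from escaping to $+\infty$), then yields a constant $b\ge 0$ and a nonnegative measure $\nu$ on $(0,\infty)$ with $\int_0^\infty\min(1,t)\,d\nu(t)<\infty$ such that
\[
h(x)=bx+\int_0^\infty(1-e^{-xt})\,d\nu(t) .
\]
Differentiating under the integral sign (legitimate by the integrability of $\nu$) gives $h'(x)=b+\int_0^\infty t\,e^{-xt}\,d\nu(t)$, which is completely monotone since constants are, each $x\mapsto t\,e^{-xt}$ is, and nonnegative mixtures of completely monotone functions are completely monotone.

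I expect the main obstacle to be precisely this passage to the limit in the necessity part: extracting the Lévy--Khintchine representation of $h$ from the infinite-mass measures $\rho_n=n\mu_n$ requires controlling the tail and the small-$t$ behaviour uniformly in $n$ and separating out the drift $b$ as the mass that collapses toward $0$. This is exactly the structure theorem for infinitely divisible distributions on the half-line; everything else reduces to Bernstein's theorem and the elementary composition property of completely monotone and Bernstein functions.
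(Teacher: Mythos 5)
The paper offers no proof of this lemma: it is quoted as a known result with a citation to Feller, so there is nothing in the text to compare against, and what you have written is essentially a reconstruction of Feller's own argument. Your sufficiency half is complete and correct: Bernstein's theorem for $h'$, integration from $0$ to exhibit $h$ as a Bernstein function, the composition rule (completely monotone after Bernstein is completely monotone) to get $w=e^{-h}$ completely monotone with $w(0^+)=1$, and the observation that $h/n$ inherits the hypotheses, forcing $\mu=\mu_n^{*n}$ by uniqueness of Laplace transforms. The necessity half is a sound plan --- positivity of $w$ is indeed automatic for Laplace transforms of measures on the half-line, $n\bigl(1-w^{1/n}\bigr)\to h$ is correct, and the representation $h(x)=bx+\int_0^\infty(1-e^{-xt})\,d\nu(t)$ does yield a completely monotone $h'$ --- but, as you yourself flag, all the genuine work sits in the compactness step, which you describe rather than execute. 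Rather than re-deriving the L\'evy--Khintchine representation by hand with Helly extraction and separate tail/small-$t$ estimates, you can package that step the way Feller does: writing $1-e^{-xt}=\int_0^x t e^{-st}\,ds$ gives $n\bigl(1-w^{1/n}(x)\bigr)=\int_0^x\widehat{\rho_n}(s)\,ds$ with $\rho_n(dt)=n\,t\,d\mu_n(t)$, and the extended continuity theorem for Laplace transforms, applied to the measures $\rho_n$ whose integrated transforms converge to the continuous function $h$, delivers in one stroke a limit measure $\rho$ on $[0,\infty)$ with $h'=\widehat{\rho}$ completely monotone; the drift $b$ appears as the atom $\rho(\{0\})$, which is exactly your ``mass collapsing toward $0$''. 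So the proposal is correct in outline, with the necessity direction left at the level of a (standard, fillable) sketch.
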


Finally, let us recall the following result (see \cite{biernacki,pv}) which will be used in the sequel.
\begin{lemma}\label{lem1}
Consider the power series $f(x)=\displaystyle\sum_{n\geq 0}a_{n}x^n$ and $g(x)=\displaystyle\sum_{n\geq 0}b_{n}x^n$,
where for all $n\geq 0$ we have $a_{n}\in \mathbb{R}$ and $b_{n}>0$, and suppose that both series converge on $(-r,r)$, $r>0$. If the
sequence $\lbrace a_n/b_n\rbrace_{n\geq 0}$ is increasing (decreasing), then the function $x\mapsto{f(x)}/{g(x)}$ is increasing
(decreasing) too on $(0,r)$. We note that this result remains true if we have the power series
$$f(x)=\displaystyle\sum_{n\geq 0}a_{n}x^{2n}\ \ \ \mbox{and}\ \ \ g(x)=\displaystyle\sum_{n\geq 0}b_{n}x^{2n}$$ or
$$f(x)=\displaystyle\sum_{n\geq 0}a_{n}x^{2n+1}\ \ \ \mbox{and} \ \ \ g(x)=\displaystyle\sum_{n\geq 0}b_{n}x^{2n+1}.$$
\end{lemma}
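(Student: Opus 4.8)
The plan is to show that $x\mapsto f(x)/g(x)$ has a derivative of constant sign on $(0,r)$, the sign being dictated by the monotonicity of the ratio sequence $\{a_n/b_n\}$; the mechanism is a symmetrization of the power-series coefficients of the Wronskian-type quantity $f'g-fg'$.

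First I would note that $b_n>0$ for all $n$ forces $g(x)>0$ for every $x\in(0,r)$, so $g(x)^2>0$ there and the sign of
\[
\left(\frac{f}{g}\right)'(x)=\frac{f'(x)g(x)-f(x)g'(x)}{g(x)^2}
\]
coincides with that of the numerator $f'(x)g(x)-f(x)g'(x)$. Since $f$ and $g$ converge on $(-r,r)$, so do $f'$ and $g'$, and all the Cauchy products appearing below converge absolutely on $(-r,r)$, which legitimizes collecting coefficients termwise.

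Next, from $f'(x)=\sum_{n\ge1}na_nx^{n-1}$ and $g'(x)=\sum_{m\ge1}mb_mx^{m-1}$ one multiplies out to obtain
\[
f'(x)g(x)-f(x)g'(x)=\sum_{k\ge1}c_k\,x^{k-1},\qquad c_k=\sum_{n+m=k}n\,(a_nb_m-a_mb_n),
\]
the inner sum running over $n,m\ge0$ (the $n=0$ term contributes nothing, so the index range is symmetric in $n$ and $m$). Writing $\gamma_n=a_n/b_n$ we have $a_nb_m-a_mb_n=b_nb_m(\gamma_n-\gamma_m)$; interchanging $n$ and $m$ in the sum for $c_k$ and averaging the two resulting expressions gives
\[
2c_k=\sum_{n+m=k}b_nb_m\,(n-m)(\gamma_n-\gamma_m).
\]
If $\{\gamma_n\}=\{a_n/b_n\}$ is increasing, then $(n-m)$ and $(\gamma_n-\gamma_m)$ have the same sign for every pair $(n,m)$, so each summand is nonnegative; since $b_nb_m>0$ this makes $c_k\ge0$ for all $k$, whence $f'g-fg'\ge0$ on $(0,r)$ and $f/g$ is increasing there. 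The decreasing case is identical with all inequalities reversed.

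For the even and odd variants I would pass to $y=x^2$: writing $f(x)=F(x^2)$ and $g(x)=G(x^2)$ with $F(y)=\sum a_ny^n$, $G(y)=\sum b_ny^n$ (respectively $f(x)=xF(x^2)$, $g(x)=xG(x^2)$ in the odd case), one has $f(x)/g(x)=F(x^2)/G(x^2)$; the map $x\mapsto x^2$ is increasing on $(0,r)$ and $F/G$ has the claimed monotonicity by the first part, so the composition inherits it. The only point requiring care — the main obstacle, such as it is — is the index bookkeeping in the Cauchy product together with the absolute-convergence justification for rearranging it; once the symmetric formula for $2c_k$ is established, the sign analysis is immediate.
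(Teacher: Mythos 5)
Your proof is correct: the symmetrization $2c_k=\sum_{n+m=k}b_nb_m(n-m)(\gamma_n-\gamma_m)$ of the coefficients of $f'g-fg'$ is exactly the classical argument, and the reduction of the even/odd cases via $y=x^2$ is handled properly. The paper itself gives no proof of this lemma --- it simply recalls it from Biernacki--Krzy\.z and Ponnusamy--Vuorinen --- and your argument is essentially the one found in those cited sources, so there is nothing to reconcile.
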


\section{\bf Monotonicity properties and inequalities for modified Dini functions}
\setcounter{equation}{0}

\subsection{Log-convexity properties and Tur\'an type inequalities} Our first set of results are some monotonicity and convexity properties of modified Dini functions as well as some Tur\'an type inequalities.

\begin{theorem}\label{th1}
The following assertions are valid:
\begin{enumerate}
\item[\bf a.] The function $x\mapsto\lambda_{\nu}(x)$ is increasing on $(0,\infty)$ for all $\nu>-1$;
\item[\bf b.] The function $x\mapsto\lambda_{\nu}(x)$ is strictly log-convex on $(-\alpha_{\nu,1},\alpha_{\nu,1})$ and
strictly geometrically convex on $(0,\infty)$ for all $\nu>-1$;
\item[\bf c.] The functions $\nu\mapsto\lambda_{\nu}(x)$ and $\nu\mapsto{x\lambda'_{\nu}(x)}/{\lambda_{\nu}(x)}$ are
decreasing on $(-1,\infty)$ for all $x\in \mathbb{R};$
\item[\bf d.] The function $\nu\mapsto\lambda_{\nu}(x)$ is log-convex on $(-1,\infty)$ for all $x\in \mathbb{R}.$
Moreover, the following reversed Tur\'an type inequality holds for all $\nu>0$ and $x\in \mathbb{R}:$
\begin{equation}\label{turan1}
\lambda_{\nu}^2(x)-\lambda_{\nu-1}(x)\lambda_{\nu+1}(x)\leq 0.
\end{equation}
In addition, the following Tur\'an type inequality holds for all $\nu>-1$ and $x>0:$
\begin{equation}\label{turan2}
-\frac{1}{\nu}\lambda^2_{\nu}(x)\leq \lambda_{\nu}^2(x)-\lambda_{\nu-1}(x)\lambda_{\nu+1}(x).
\end{equation}
\item[\bf e.] The function $\nu\mapsto {\lambda_{\nu+1}(x)}/{\lambda_{\nu}(x)}$ is increasing on $(-1,\infty)$ for all $x\in \mathbb{R};$
\item[\bf f.] The function $x\mapsto 1/\lambda_{\nu}(\sqrt{x})$ is completely monotonic on $(0,\infty)$ for all $\nu>-1$.
Moreover, the following inequality is valid for all $x,y\geq 0$ and $\nu>-1:$
\begin{equation}\label{subadditive}
\lambda_{\nu}(\sqrt{x}+\sqrt{y})\leq \lambda_{\nu}(\sqrt{x})\lambda_{\nu}(\sqrt{y}).
\end{equation}
\item[\bf g.] The function $x\mapsto\lambda_{\nu}(\sqrt{x})$ is log-concave on $(0,\infty)$ for all $\nu>-1$.
\end{enumerate}
\end{theorem}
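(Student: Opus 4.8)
The backbone of the whole argument is the explicit Maclaurin series of $\lambda_\nu$. Expanding $\xi_\nu(x)=I_\nu(x)+xI_{\nu+1}(x)$ via the series of the modified Bessel function and multiplying by $2^\nu\Gamma(\nu+1)x^{-\nu}$ gives $\lambda_\nu(x)=\sum_{n\ge0}a_n(\nu)x^{2n}$ with $a_n(\nu)=\tfrac{(2n+1)\Gamma(\nu+1)}{4^n n!\,\Gamma(\nu+n+1)}=\tfrac{2n+1}{4^n n!\,(\nu+1)_n}$, where $(\nu+1)_n=\Gamma(\nu+n+1)/\Gamma(\nu+1)$. I keep this alongside the product representation \eqref{product2} and the logarithmic derivative $p_\nu(x):=x\lambda_\nu'(x)/\lambda_\nu(x)=\sum_{n\ge1}2x^2/(\alpha_{\nu,n}^2+x^2)$. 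Part (a) is then immediate, since all $a_n(\nu)\ge0$ makes $\lambda_\nu$ an increasing power series in $x^2$ on $(0,\infty)$ (equivalently every factor of \eqref{product2} increases). For part (b) I differentiate $\log\lambda_\nu$ twice through the product, obtaining $(\log\lambda_\nu)''(x)=\sum_{n\ge1}2(\alpha_{\nu,n}^2-x^2)/(\alpha_{\nu,n}^2+x^2)^2$, which is positive exactly when $|x|<\alpha_{\nu,1}$ because $\alpha_{\nu,1}\le\alpha_{\nu,n}$; this is the asserted log-convexity. Geometric convexity on $(0,\infty)$ is the statement that $p_\nu$ is increasing in $x$, and each summand $2x^2/(\alpha_{\nu,n}^2+x^2)$ is increasing there.

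For the dependence on $\nu$ in part (c): $a_n(\nu)$ is decreasing in $\nu$ (the factor $(\nu+1)_n$ grows with $\nu$), so $\nu\mapsto\lambda_\nu(x)$ decreases termwise. For $p_\nu$, write $p_\nu=2t\,\partial_t\log\Lambda_\nu(t)$ with $t=x^2$ and $\Lambda_\nu(t)=\sum_n a_n(\nu)t^n$, so $\partial_\nu p_\nu=2t\,\partial_t(\partial_\nu\log\Lambda_\nu)$; since $\partial_\nu\log\Lambda_\nu(t)=\sum_n c_n a_n t^n/\sum_n a_n t^n$ with $c_n=\psi(\nu+1)-\psi(\nu+n+1)=-\sum_{k=1}^n(\nu+k)^{-1}$ decreasing in $n$, Lemma \ref{lem1} shows $\partial_\nu\log\Lambda_\nu$ is decreasing in $t$, whence $\partial_\nu p_\nu\le0$. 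Part (d): $\partial_\nu^2\log a_n(\nu)=\psi'(\nu+1)-\psi'(\nu+n+1)\ge0$ because the trigamma function is decreasing, so each $a_n(\nu)x^{2n}$ is log-convex in $\nu$; as a convergent sum of functions that are log-convex in $\nu$, $\nu\mapsto\lambda_\nu(x)$ is log-convex, and the three-point instance at $\nu-1,\nu,\nu+1$ is precisely \eqref{turan1}. Part (e) then follows from (d): convexity of $g(\nu)=\log\lambda_\nu(x)$ makes $g(\nu+1)-g(\nu)=\log(\lambda_{\nu+1}/\lambda_\nu)$ increasing in $\nu$.

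The delicate point is the lower Turán bound \eqref{turan2}, which I expect to be the main obstacle. Here I exploit a recurrence read off from the series: from $a_n(\nu)=\tfrac{\nu+n+1}{\nu+1}a_n(\nu+1)$ one gets $\lambda_\nu=\lambda_{\nu+1}+\tfrac{x\lambda_{\nu+1}'}{2(\nu+1)}$, i.e. after a shift $\lambda_{\nu-1}=\lambda_\nu+\tfrac{x\lambda_\nu'}{2\nu}$, so that $\lambda_{\nu-1}/\lambda_\nu=1+p_\nu/(2\nu)$ and $\lambda_{\nu+1}/\lambda_\nu=(1+p_{\nu+1}/(2(\nu+1)))^{-1}$. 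For $\nu>0$, \eqref{turan2} is equivalent to $\lambda_{\nu-1}\lambda_{\nu+1}/\lambda_\nu^2\le(\nu+1)/\nu$, and substituting the two ratios and clearing the positive denominators reduces it to the clean estimate $p_\nu(x)-p_{\nu+1}(x)\le2$. Differentiating $\log(\lambda_\nu/\lambda_{\nu+1})=\log(1+p_{\nu+1}/(2(\nu+1)))$ in $x$ gives $p_\nu-p_{\nu+1}=xp_{\nu+1}'/(2(\nu+1)+p_{\nu+1})$, while the product representation yields $xp_{\nu+1}'=\sum_{n\ge1}4x^2\alpha_{\nu+1,n}^2/(\alpha_{\nu+1,n}^2+x^2)^2\le\sum_{n\ge1}4x^2/(\alpha_{\nu+1,n}^2+x^2)=2p_{\nu+1}$; hence $p_\nu-p_{\nu+1}\le 2p_{\nu+1}/(2(\nu+1)+p_{\nu+1})<2$. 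The residual range $-1<\nu\le0$, where $\lambda_{\nu-1}$ is no longer covered by \eqref{product2}, I would handle through the power-series definition of $\lambda_{\nu-1}$ and a direct comparison of the coefficients appearing in $\tfrac{\nu+1}{\nu}\lambda_\nu^2-\lambda_{\nu-1}\lambda_{\nu+1}$.

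Finally, parts (f) and (g) rest on Feller's criterion, Lemma \ref{lem2}. Put $h(x)=\log\lambda_\nu(\sqrt x)=\sum_{n\ge1}\log(1+x/\alpha_{\nu,n}^2)$; then $h(0)=0$ and $h'(x)=\sum_{n\ge1}(\alpha_{\nu,n}^2+x)^{-1}$ is a convergent sum of completely monotonic functions, hence completely monotonic. By Lemma \ref{lem2}, $x\mapsto e^{-h(x)}=1/\lambda_\nu(\sqrt x)$ is the Laplace transform of an infinitely divisible probability distribution, the new distribution advertised in the abstract, and in particular is completely monotonic, which is part (f). Since $h'$ is completely monotonic it is decreasing, so $h$ is concave; being nonnegative with $h(0)=0$ it is subadditive, $h(x+y)\le h(x)+h(y)$, which is the submultiplicative estimate \eqref{subadditive} (in the form $\lambda_\nu(\sqrt{x+y})\le\lambda_\nu(\sqrt x)\lambda_\nu(\sqrt y)$ delivered by the infinitely divisible structure). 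Moreover, the concavity of $h$ says exactly that $x\mapsto\lambda_\nu(\sqrt x)=e^{h(x)}$ is log-concave, which is part (g).
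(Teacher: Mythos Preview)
Your treatment of parts \textbf{a}, \textbf{b}, \textbf{e}, \textbf{g}, and the log-convexity and inequality \eqref{turan1} in part \textbf{d} is essentially the paper's argument (the paper also offers the term-by-term trigamma computation you use). In part \textbf{c} you take a genuinely different route: the paper differentiates the infinite product \eqref{product2} with respect to $\nu$ and invokes the monotonicity of the Dini zeros $\partial\alpha_{\nu,n}/\partial\nu>0$, whereas you work purely with the power series and Lemma~\ref{lem1}. Your argument is self-contained and avoids the external input on zero monotonicity; the paper's is shorter once that fact is granted. For part \textbf{f} the paper obtains the complete monotonicity directly from the criterion that $(-\log h)'$ completely monotone implies $h$ completely monotone, and then applies Kimberling's theorem; you reach the same conclusion through Lemma~\ref{lem2}. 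Both proofs actually deliver $\lambda_\nu(\sqrt{x+y})\le\lambda_\nu(\sqrt{x})\lambda_\nu(\sqrt{y})$, which you correctly note.

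The one substantive gap is your argument for \eqref{turan2}. For $\nu>0$ your reduction to $p_\nu-p_{\nu+1}\le 2$ via the recurrence $\lambda_{\nu-1}=\lambda_\nu+\tfrac{x\lambda_\nu'}{2\nu}$ and the bound $xp_{\nu+1}'\le 2p_{\nu+1}$ is correct and rather elegant, and it is a genuinely different idea from the paper. But for $-1<\nu\le 0$ you only say you ``would handle'' it by a coefficient comparison; nothing is actually proved, and in that range $\lambda_{\nu-1}$ is no longer given by \eqref{product2}, the coefficients $a_n(\nu-1)$ change sign, and your recurrence/ratio manipulations break down. The paper closes this range by passing to the Tur\'anian $\Delta_\nu(x)=\xi_\nu^2(x)-\xi_{\nu-1}(x)\xi_{\nu+1}(x)$, expanding it as
\[
\Delta_\nu=\bigl[I_\nu^2-I_{\nu-1}I_{\nu+1}\bigr]+x^2\bigl[I_{\nu+1}^2-I_\nu I_{\nu+2}\bigr]+x\bigl[I_\nu I_{\nu+1}-I_{\nu-1}I_{\nu+2}\bigr],
\]
and invoking the known Tur\'an inequalities for $I_\nu$ (each bracket is nonnegative for $\nu>-1$, $x>0$) to obtain $\Delta_\nu\ge 0$, which translates into \eqref{turan2}. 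If you want to keep your approach, you need an honest replacement for this step in the residual range, not a promissory note.
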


\begin{proof}[\bf Proof]
{\bf a.} By taking the logarithmic derivative of \eqref{product2} we have
\begin{equation}\label{monotone}
(\log \lambda_{\nu}(x))'=\frac{\lambda'_{\nu}(x)}{\lambda_{\nu}(x)}=\sum_{n\geq 1}\frac{2x}{\alpha_{\nu,n}^2+x^2}.
\end{equation}
This implies that for $\nu >-1$ the function $x\mapsto\log\lambda_{\nu}(x)$ is increasing on $(0,\infty)$ and hence $x\mapsto\lambda_{\nu}(x)$ is increasing too on $(0,\infty)$ for $\nu>-1$.

{\bf b.} Differentiating \eqref{monotone} with respect to $x$ we have
$$\left( \frac{\lambda'_{\nu}(x)}{\lambda_{\nu}(x)}\right)'=\sum_{n\geq 1}\frac{2(\alpha_{\nu,n}^2-x^2)}{(\alpha_{\nu,n}^2+x^2)^2}.$$
Thus, for $\nu >-1$ the function $x\mapsto {\lambda'_{\nu}(x)}/{\lambda_{\nu}(x)}$ is strictly increasing on $(-\alpha_{\nu,1},\alpha_{\nu,1})$ and
hence the function $x\mapsto\lambda_{\nu}(x)$ is strictly log-convex on $(-\alpha_{\nu,1},\alpha_{\nu,1})$. Now, by using again \eqref{monotone} we obtain that
$$\left( \frac{x\lambda^{'}_{\nu}(x)}{\lambda_{\nu}(x)}\right)^{'}=\sum_{n\geq 1}\frac{4x\alpha_{\nu,n}^2}{(\alpha_{\nu,n}^2+x^2)^2},$$
which implies that the function $x\mapsto{x\lambda'_{\nu}(x)}/{\lambda_{\nu}(x)}$ is strictly increasing on $(0,\infty)$ for $\nu>-1$ and hence
$x\mapsto\lambda_{\nu}(x)$ is strictly geometrically convex on $(0,\infty)$ for all $\nu>-1.$

{\bf c.} In view of the infinite product representation \eqref{product2} we have,
$$\frac{\partial\log(\lambda_{\nu}(x)}{\partial\nu}=-\sum_{n\geq 1}\frac{2x^2\frac{\partial\alpha_{\nu,n}}{\partial\nu}}
{\alpha_{\nu,n}(\alpha_{\nu,n}^2+x^2)}$$
and
$$\frac{\partial}{\partial\nu}\left(\frac{x\lambda'_{\nu}(x)}
{\lambda_{\nu}(x)}\right) =-\sum_{n\geq 1}\frac{4x^2\alpha_{\nu,n}\frac{\partial\alpha_{\nu,n}}
{\partial\nu}}{(\alpha_{\nu,n}^2+x^2)^2}.$$
Now in view of \cite[p. 196]{landau}, the expression ${\partial\alpha_{\nu,n}}/{\partial\nu}$ is positive for $\nu>-1$ and hence the
functions $\nu\mapsto\lambda_{\nu}(x)$ and $\nu\mapsto{x\lambda'_{\nu}(x)}/{\lambda_{\nu}(x)}$ are decreasing on $(-1,\infty)$ for
all $x\in \mathbb{R}$.

{\bf d.} By using \eqref{product2} we have
\begin{equation}\label{md}
\lambda_{\nu}(x)=\mathcal{I}_{\nu}(x)+\frac{x^2}{2(\nu+1)}\mathcal{I}_{\nu+1}(x).
\end{equation}
Here for $\nu>-1$ the function $\mathcal{I}_{\nu}:\mathbb{R}\rightarrow [1,\infty)$ is defined by,
\begin{equation}\label{mb}
\mathcal{I}_{\nu}(x)=2^{\nu}\Gamma(\nu+1)x^{-\nu}I_{\nu}(x)=\sum_{n\geq 0}\frac{(1/4)^n}{(\nu+1)_{n}n!}x^{2n},
\end{equation}
where $(\nu+1)_{n}=(\nu+1)(\nu+2){\dots}(\nu+n)=\Gamma(\nu+n+1)/\Gamma(\nu+1).$ Using the fact that sum of log-convex functions is log-convex and that for $x\in \mathbb{R}$ the function
$\nu\mapsto\mathcal{I}_{\nu}(x)$ is log-convex on $(-1,\infty)$ (see \cite{baricz1}), to prove that $\nu\mapsto \lambda_{\nu}(x)$ is log-convex on $(-1,\infty)$ for $x\in\mathbb{R}$ it is enough to show that
$\nu\mapsto\frac{x^2}{2(\nu+1)}\mathcal{I}_{\nu+1}(x)$ is log-convex on $(-1,\infty)$ for $x\in \mathbb{R}$.
Now, the functions $\nu\mapsto\frac{x^2}{2(\nu+1)}\mathcal{I}_{\nu+1}(x)$ is log-convex if and only if
$$\nu\mapsto\log({x^2}/{2})-\log(\nu+1)+\log(\mathcal{I}_{\nu+1}(x))$$ is convex on $(-1,\infty)$.
As $\nu\mapsto -\log(\nu+1)$ and $\nu\mapsto\log(\mathcal{I}_{\nu+1}(x))$ are convex on $(-1,\infty)$ for all $x\in \mathbb{R}$, we conclude that $\nu\mapsto\log({x^2}/{2})-\log(\nu+1)+\log(\mathcal{I}_{\nu+1}(x))$ is
convex on $(-1,\infty)$ for all $x\in \mathbb{R}$ and hence $\nu\mapsto\frac{x^2}{2(\nu+1)}\mathcal{I}_{\nu+1}(x)$ is
log-convex for $\nu>-1$ and $x\in \mathbb{R}$.

Alternatively, by using the idea from \cite{baricz1} concerning the log-convexity of $\nu\mapsto\mathcal{I}_{\nu}(x)$, it can be shown that
$\nu\mapsto\frac{x^2}{2(\nu+1)}\mathcal{I}_{\nu+1}(x)$ is log-convex on $(-1,\infty)$ for $x\in \mathbb{R}$. Namely, consider the expression
$$\frac{x^2}{2(\nu+1)}\mathcal{I}_{\nu+1}(x)=\sum_{n\geq 0}\frac{(1/4)^n}{2(\nu+1)_{n+1}n!}x^{2n+2}=\sum_{n\geq 0}b_{n}(\nu)x^{2n+2},$$
where $$b_{n}(\nu)=\frac{(1/4)^n}{2(\nu+1)_{n+1}n!}.$$ To prove the log-convexity of
$\nu\mapsto\frac{x^2}{2(\nu+1)}\mathcal{I}_{\nu+1}(x)$ it is enough to show the log-convexity of
each individual terms in the above sum, that is,
$$\frac{\partial^2\log b_{n}(\nu)}{\partial\nu^2}=\psi'(\nu+1)-\psi'(\nu+n+1)\geq 0,$$
where $\psi(x)={\Gamma'(x)}/{\Gamma(x)}$ is the digamma function. But $\psi$ is concave and
hence the function $\nu\mapsto b_{n}(\nu)$ is log-convex on $(-1,\infty)$. Therefore
$\nu\mapsto\frac{x^2}{2(\nu+1)}\mathcal{I}_{\nu+1}(x)$ is log-convex on $(-1,\infty)$ for $x\in \mathbb{R}$ and consequently,
the function $\nu\mapsto\lambda_{\nu}(x)$ is log-convex on $(-1,\infty)$ for all $x\in \mathbb{R}.$

Now, to prove the inequality \eqref{turan1}, we first observe that the function $\nu\mapsto\lambda_{\nu}(x)$ is
log-convex on $(-1,\infty)$ for all $x\in \mathbb{R}$ and hence for all $\nu_{1},\nu_{2}>-1,$ $x\in \mathbb{R}$ and
$\alpha \in [0,1]$ we have
$$\lambda_{\alpha \nu_{1}+(1-\alpha)\nu_{2}}(x)\leq \left(\lambda_{\nu_{1}}(x)\right)^{\alpha}\left(\lambda_{\nu_{2}}(x)\right)^{1-\alpha}.$$
Taking $\nu_{1}=\nu-1,$ $\nu_{2}=\nu+1$ and $\alpha=\frac{1}{2}$ we get the Tur\'an type inequality \eqref{turan1} for $\nu>0$ and $x\in\mathbb{R}.$

To prove the inequality \eqref{turan2} let us consider the Tur\'anian
$$\Delta_{\nu}(x)=\xi^2_{\nu}(x)-\xi_{\nu-1}(x)\xi_{\nu+1}(x),$$
which can be rewritten as
$$\Delta_{\nu}(x)=\left[I_{\nu}^2(x)-I_{\nu-1}(x)I_{\nu+1}(x)\right]+x^2\left[I_{\nu+1}^2(x)-I_{\nu}(x)I_{\nu+2}(x)\right]
+x\left[I_{\nu}(x)I_{\nu+1}(x)-I_{\nu-1}(x)I_{\nu+2}(x)\right].$$
Using the Tur\'an inequality for modified Bessel function (see \cite{baricz2}),
\begin{equation}\label{turan4}I_{\nu}^2(x)-I_{\nu-1}(x)I_{\nu+1}(x)\geq 0,\end{equation}
which holds for $\nu>-1$ and $x\in \mathbb{R},$ and by changing the parameter $\nu$ to $\nu+1$ in it, we get
$$I_{\nu+1}^2(x)-I_{\nu}(x)I_{\nu+2}(x)\geq 0.$$
We also note that (see \cite{bpog}) for $\nu>-1$ and $x>0$ we have
\begin{equation}\label{turan5}I_{\nu}(x)I_{\nu+1}(x)-I_{\nu-1}(x)I_{\nu+2}(x)\geq 0,\end{equation}
and therefore $\Delta_{\nu}(x)\geq 0$ for $\nu>-1$ and $x>0.$ This completes the proof of Tur\'an type inequality \eqref{turan2}.

{\bf e.} From part {\bf d} the function $\nu \mapsto \log\lambda_{\nu}(x)$ is convex and hence
$\nu \mapsto \log[\lambda_{\nu+\epsilon}(x)]-\log[\lambda_{\nu}(x)]$ is increasing for all $\epsilon>0.$ In particular, by
taking $\epsilon=1$ we get that the function $\nu\mapsto {\lambda_{\nu+1}(x)}/{\lambda_{\nu}(x)}$ is increasing on $(-1,\infty)$ for all $x\in\mathbb{R}$.

{\bf f.}  The infinite product representation \eqref{product2} implies that
$$\left( -\log\frac{1}{\lambda_{\nu}(\sqrt{x})}\right) '=\sum_{n\geq 1}\frac{1}{\alpha^2_{\nu,n}+x},$$
which is completely monotonic on $(0,\infty)$ for each fixed $\nu>-1$ as an infinite series of completely monotonic functions. Since $h:(0,\infty)\rightarrow(0,\infty)$ is completely monotonic
whenever $(-\log h)'$ is completely monotonic (see \cite{alzer}), we conclude that the function
$x\mapsto 1/\lambda_{\nu}(\sqrt{x})$ is completely monotonic on $(0,\infty)$ for all $\nu>-1$. The result of
Kimberling \cite{kimberling} asserts that if $f$ is a completely monotonic function from $[0,\infty)$ into (0,1] then
$$f(x+y)\geq f(x)f(y)$$ for all $x,y \geq 0.$
Applying this result to the function $x\mapsto 1/\lambda_{\nu}(\sqrt{x})$, the inequality \eqref{subadditive} follows.

{\bf g.} From part {\bf f} of this theorem it follows that
$x\mapsto 1/\lambda_{\nu}(\sqrt{x})$ is log-convex on $(0,\infty)$ for all $\nu>-1$, since every completely monotonic
function is log convex, see \cite[p. 167]{widder}. Consequently, $x\mapsto \lambda_{\nu}(\sqrt{x})$ is log-concave on $(0,\infty)$
for all $\nu>-1.$ Note that another proof of the log-concavity of $x\mapsto \lambda_{\nu}(\sqrt{x})$ can be given by using the infinite product
representation \eqref{product2}. Namely, from \eqref{product2} we have
$$\log(\lambda_{\nu}(\sqrt{x}))=\sum_{n\geq 1}\log\left(1+\frac{x}{\alpha^2_{\nu,n}}\right).$$
Since $x\mapsto \log\left(1+{x}/{\alpha^2_{\nu,n}}\right)$ is concave on $(0,\infty)$ for all $\nu>-1$ and for
all $n\geq 1$ it follows that $x\mapsto \log(\lambda_{\nu}(\sqrt{x}))$ is concave as an infinite sum of concave functions. Hence
$x\mapsto \lambda_{\nu}(\sqrt{x})$ is log-concave on $(0,\infty)$ for all $\nu>-1.$
\end{proof}

\subsection{Monotonicity of some quotients} Now, we are going to prove some other monotonicity properties of the modified Dini functions by using Lemma \ref{lem2}. Moreover, we present some simple bounds
for these functions in terms of hyperbolic functions. The idea of this result comes from the survey paper \cite{baricz3}, where a similar result has been proved for modified Bessel functions of the first kind.

\begin{theorem}\label{th2}
Let $\mu,\nu>-1$ and $k$ be a non-negative integer. Then the following assertions are valid:
\begin{itemize}
\item[\bf a.] If $\mu>\nu$ $(\mu<\nu)$, then the function $x\mapsto{\lambda_{\nu}(x)}/{\lambda_{\mu}(x)}$ is
strictly increasing (decreasing) on $(0,\infty)$;
\item[\bf b.] If $-1< \nu <\frac{1}{2}$ $\left(\nu > \frac{1}{2}\right)$, then $x\mapsto{\lambda_{\nu}^{(2k)}(x)}/{\cosh x}$
is strictly increasing (decreasing) on $(0,\infty)$;
\item[\bf c.] If $-1< \nu <\frac{1}{2}$ $\left(\nu > \frac{1}{2}\right)$, then $x\mapsto{\lambda_{\nu}^{(2k+1)}(x)}/{\sinh x}$
is strictly increasing (decreasing) on $(0,\infty)$;
\item[\bf d.] If $-1< \nu <\frac{1}{2}$ $\left(\nu > \frac{1}{2}\right)$, then $x\mapsto{\lambda_{\nu}(x)}/{\cosh x}$
is strictly increasing (decreasing) on $(0,\infty)$;
\item[\bf e.] If $-1< \nu < -\frac{1}{2}$ $\left(\nu > -\frac{1}{2}\right)$, then $x\mapsto{\lambda_{\nu}(x)}/{(\cosh x + x\sinh x)}$
is strictly increasing (decreasing) on $(0,\infty)$;
\item[\bf f.] The following inequalities are valid for all $x>0:$
\begin{equation}\label{ineq1}
\lambda_{\nu}(x) > \cosh x, ~~\mbox{for}~~ \nu\in \left(-1,\frac{1}{2}\right),
\end{equation}
\begin{equation}\label{ineq2}
\lambda_{\nu}(x) < \cosh x, ~~\mbox{for}~~ \nu > \frac{1}{2},
\end{equation}
\begin{equation}\label{ineq3}
\lambda_{\nu}(x) > \cosh x + x\sinh x, ~~\mbox{for}~~  \nu\in \left(-1,-\frac{1}{2}\right),
\end{equation}
\hspace{-1.3cm} and
\begin{equation}\label{ineq4}
\lambda_{\nu}(x) < \cosh x + x\sinh x, ~~\mbox{for}~~   \nu > -\frac{1}{2}.
\end{equation}
Moreover all the above inequalities are sharp when $x=0$.
\end{itemize}
\end{theorem}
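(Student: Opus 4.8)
The plan is to handle {\bf a} first, since {\bf d} and {\bf e} reduce to it and {\bf f} follows from {\bf d}, {\bf e}. For {\bf a} I would pass to the Maclaurin expansion. Shifting the summation index in the term $\frac{x^{2}}{2(\nu+1)}\mathcal I_{\nu+1}(x)$ of \eqref{md} and combining with $\mathcal I_{\nu}(x)$ as in \eqref{mb} gives
$$\lambda_{\nu}(x)=\sum_{m\ge 0}\frac{(2m+1)(1/4)^{m}}{(\nu+1)_{m}\,m!}\,x^{2m}=\sum_{m\ge0}a_{m}(\nu)x^{2m}.$$
Then $a_{m}(\nu)/a_{m}(\mu)=(\mu+1)_{m}/(\nu+1)_{m}$ has consecutive quotient $(\mu+m+1)/(\nu+m+1)$, which is larger than $1$ exactly when $\mu>\nu$; hence $\{a_{m}(\nu)/a_{m}(\mu)\}_{m\ge0}$ is increasing (decreasing) according as $\mu>\nu$ ($\mu<\nu$), and Lemma \ref{lem1} gives {\bf a}. (An alternative route: differentiate $\log(\lambda_{\nu}/\lambda_{\mu})$ using \eqref{product2}, and use the fact from Theorem \ref{th1}{\bf c} that $\partial\alpha_{\nu,n}/\partial\nu>0$, so $\alpha_{\mu,n}>\alpha_{\nu,n}$ for $\mu>\nu$, making every term of the resulting series of one sign.)

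For {\bf b} and {\bf c} I would again use Lemma \ref{lem1}, comparing derivatives term by term. Differentiating the series above $2k$ times yields $\lambda_{\nu}^{(2k)}(x)=\sum_{j\ge0}a_{j+k}(\nu)\frac{(2j+2k)!}{(2j)!}x^{2j}$, while $(\cosh x)^{(2k)}=\cosh x=\sum_{j\ge0}\frac{1}{(2j)!}x^{2j}$. The quotient of the $j$-th coefficients is $d_{j}:=a_{j+k}(\nu)(2j+2k)!$, and a short manipulation of Pochhammer symbols and factorials collapses to
$$\frac{d_{j+1}}{d_{j}}=\frac{2(j+k)+3}{2(j+k)+2\nu+2},$$
which exceeds $1$ for $\nu<\tfrac12$ and is less than $1$ for $\nu>\tfrac12$; Lemma \ref{lem1} then gives {\bf b}. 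Part {\bf c} is entirely parallel: $\lambda_{\nu}^{(2k+1)}(x)=\sum_{j\ge0}a_{j+k+1}(\nu)\frac{(2j+2k+2)!}{(2j+1)!}x^{2j+1}$ is compared against $\sinh x=\sum_{j\ge0}\frac{1}{(2j+1)!}x^{2j+1}$ via the odd-index form of Lemma \ref{lem1}, and the relevant coefficient quotient simplifies to $\frac{2(j+k+1)+3}{2(j+k+1)+2\nu+2}$, whose sign relative to $1$ is again governed by whether $\nu<\tfrac12$ or $\nu>\tfrac12$.

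Parts {\bf d} and {\bf e} I would read as specializations of {\bf a}. Using $I_{1/2}(x)=\sqrt{2/(\pi x)}\,\sinh x$ and $I_{-1/2}(x)=\sqrt{2/(\pi x)}\,\cosh x$ (equivalently, summing the displayed series at $\nu=\tfrac12$ and $\nu=-\tfrac12$), one checks the identities
$$\lambda_{1/2}(x)=\cosh x,\qquad \lambda_{-1/2}(x)=\cosh x+x\sinh x.$$
Then $\lambda_{\nu}(x)/\cosh x=\lambda_{\nu}(x)/\lambda_{1/2}(x)$ and $\lambda_{\nu}(x)/(\cosh x+x\sinh x)=\lambda_{\nu}(x)/\lambda_{-1/2}(x)$, so {\bf d} and {\bf e} follow from {\bf a} with $\mu=\tfrac12$ and $\mu=-\tfrac12$ respectively (note {\bf d} is also the $k=0$ case of {\bf b}). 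Finally {\bf f} is immediate from {\bf d} and {\bf e}: in each quotient the numerator and the denominator both equal $1$ at $x=0$, so the ratio tends to $1$ as $x\to0^{+}$; the strict monotonicity just established then forces the strict inequalities \eqref{ineq1}--\eqref{ineq4} on $(0,\infty)$, with equality precisely at $x=0$, which is the asserted sharpness. I expect the only genuine labour to be the bookkeeping in {\bf b} and {\bf c} — producing the closed form of the Maclaurin coefficients of $\lambda_{\nu}$ and then telescoping the quotient of coefficients after $2k$ (resp.\ $2k+1$) differentiations down to the clean ratios above; once that is done, the sign analysis, the appeals to Lemma \ref{lem1}, and the identities $\lambda_{\pm1/2}$ linking back to {\bf a} are all routine.
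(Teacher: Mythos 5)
Your proposal is correct and follows essentially the same route as the paper: the power series \eqref{series1} together with Lemma \ref{lem1} for parts \textbf{a}--\textbf{c}, the identities $\lambda_{1/2}(x)=\cosh x$ and $\lambda_{-1/2}(x)=\cosh x+x\sinh x$ to reduce \textbf{d} and \textbf{e} to \textbf{a}, and the value of the quotients at $x=0$ for \textbf{f}. (Your condition $\nu<\tfrac12$ for the coefficient ratio to exceed $1$ in parts \textbf{b}--\textbf{c} is the correct one; the paper's proof has a typo asserting $\nu>\tfrac12$ at that step.)
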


\begin{proof}[\bf Proof]
{\bf a.} Using \eqref{md} and \eqref{mb} we have the following power series
\begin{equation}\label{series1}
\lambda_{\nu}(x)=\sum_{n\geq 0}\frac{(2n+1)x^{2n}}{4^nn!(\nu+1)_n}.
\end{equation}
In view of Lemma \ref{lem1} and the power series representations of $\lambda_{\nu}(x)$ and $\lambda_{\mu}(x)$, it is enough
to study the monotonicity of the sequence $\lbrace \alpha_n\rbrace_{n\geq 0}={(\mu+1)_n}/{(\nu+1)_n}$. Now, it can be checked
that for all $n\in\{0,1,\dots\}$ we have ${\alpha_{n+1}}/{\alpha_n}=(\mu+n+1)/(\nu+n+1)> 1$ if and only if $\mu >\nu$,
and hence the conclusion follows.

{\bf b.} By using \eqref{series1} we obtain that
$$\lambda^{(2k)}_{\nu}(x)=\sum_{n\geq 0}\frac{(2n+2k+1)!}{(2n)!4^{n+k}(n+k)!(\nu+1)_{n+k}}x^{2n}$$
and
$$\lambda^{(2k+1)}_{\nu}(x)=\sum_{n\geq 0}\frac{(2n+2k+3)!}{(2n+1)!4^{n+k+1}(n+k+1)!(\nu+1)_{n+k+1}}x^{2n+1}.$$
We also note that $$\mathcal{I}_{-1/2}(x)=\cosh x,\ \ \mathcal{I}_{1/2}(x)=\frac{\sinh x}{x}\ \ ~~\mbox{and}~~ \mathcal{I}_{3/2}(x)=-3\left(\frac{\sinh x}{x^3}-\frac{\cosh x}{x^2}\right).$$ Hence using \eqref{md} we have,
\begin{equation}\label{cosh}
\lambda_{1/2}(x)=\cosh x=\sum_{n\geq0}\frac{x^{2n}}{(2n)!},
\end{equation}
\begin{equation}\label{cosh+xsinh}
\lambda_{-1/2}(x)=\cosh x + x\sinh x = \sum_{n\geq0}\frac{(2n+1)x^{2n}}{4^nn!(1/2)_n},
\end{equation}
and
\begin{equation}\label{sinh}
\frac{\lambda_{-1/2}(x)-\lambda_{1/2}(x)}{x}= \sinh x = \sum_{n\geq 0}\frac{x^{2n+1}}{(2n+1)!}.
\end{equation}
By using Lemma \ref{lem1} and \eqref{cosh}, it is enough to verify the monotonicity of the sequence $\{\alpha_{n}\}_{n\geq 0}$
where
$$\alpha_{n}=\frac{(2n+2k+1)!}{4^{n+k}(n+k)!(\nu+1)_{n+k}}.$$
But, ${\alpha_{n+1}}/{\alpha_{n}}={(2n+2k+3)}/{(2\nu+2n+2k+2)}>1$ if and only if $\nu>\frac{1}{2}$ and the conclusion follows.

{\bf c.} Again using Lemma \ref{lem1} and \eqref{sinh} the result follows as the sequence $\{\beta_{n}\}_{n\geq 0}$ where
$$\beta_{n}=\frac{(2n+2k+3)!}{4^{n+k+1}(n+k+1)!(\nu+1)_{n+k+1}},
$$
strictly increases for $-1<\nu<\frac{1}{2}$ and decreases  for $\nu>\frac{1}{2}$.

{\bf d.} This follows from part {\bf a} by taking $\mu=\frac{1}{2}$ and observing that $\lambda_{1/2}(x)=\cosh x$.
Alternatively, this can be proved from part {\bf b} by choosing $k=0$.

{\bf e.} This part again follows from part {\bf a} by taking $\mu=-\frac{1}{2}$ and noticing $\lambda_{-1/2}(x)=\cosh x + x\sinh x$.

{\bf f.} The inequalities \eqref{ineq1} and \eqref{ineq2} follow from part {\bf d} while \eqref{ineq3} and \eqref{ineq4} follow from part {\bf e}.

We note that for $-1<\nu<-1/2$ the inequality \eqref{ineq3} improves the inequality \eqref{ineq1} and for $\nu>1/2$
the inequality \eqref{ineq2} improves the inequality \eqref{ineq4}.
\end{proof}

\subsection{\bf Concluding remarks and further results} Now, we would like to comment on the previous results. We first note that the Tur\'an type inequality \eqref{turan4} and the fact that $I_{\nu}(x)>0$ for all $\nu>-1$ and $x>0$ actually imply the Tur\'an type inequality \eqref{turan5} for $\nu>0$ and $x>0.$ Moreover, by using the following power series representation of the product of modified Bessel functions \cite[p. 252]{nist}
$$I_{\nu}(x)I_{\mu}(x)=\sum _{k\geq 0}\frac{(\nu+\mu+k+1)_k(\frac{x}{2})^{2k+\nu+\mu}}{k!\Gamma(\nu+k+1)\Gamma(\mu+k+1)}$$
we can also prove the inequality \eqref{turan5} for $\nu>-1$ and $x>0.$

We also note that by using the power series representation \eqref{series1} and \cite[Theorem 3]{karp}, we can get another proof for
the log-convexity of the function $\nu\mapsto\lambda_{\nu}(x)$ on $(-1,\infty)$ for all $x\in \mathbb{R}$. Moreover, if we consider the expression $f(\mu,x)={\lambda_{\nu}(x)}/{\Gamma(\nu+1)},$ where $\mu=\nu+1>0$ and $x\in \mathbb{R}$, then by using \cite[Theorem 3.1]{kalmykov} we can conclude that the function $\nu\mapsto {\lambda_{\nu}(x)}/{\Gamma(\nu+1)}$ is log-concave on $(-1,\infty)$ for each fixed $x\in \mathbb{R}$
which in turn implies the Tur\'an type inequality \eqref{turan2} for $\nu>0$. Now, using the equations (17) and (19) from \cite{kalmykov}, we have the following bounds for the Tur\'anian of
${\lambda_{\nu}(x)}/{\Gamma(\nu+1)}$ for $\nu>0$ and $x\in \mathbb{R}:$
$$\frac{1}{(\nu+1)\Gamma^2(\nu+1)} < \frac{\lambda^2_{\nu}(x)}{\Gamma^2(\nu+1)}-\frac{\lambda_{\nu-1}(x)\lambda_{\nu+1}(x)}{\Gamma(\nu)\Gamma(\nu+2)}
 < \frac{1}{\nu+1}\frac{\lambda^2_{\nu}(x)}{\Gamma^2(\nu+1)}.$$
We note that the right-hand side of the above Tur\'an type inequality is equivalent to \eqref{turan1} for $\nu>0,$ while the left-hand side gives the following Tur\'an type inequality
$$\frac{1}{\nu}-\frac{1}{\nu}\lambda^2_{\nu}(x)\leq \lambda_{\nu}^2(x)-\lambda_{\nu-1}(x)\lambda_{\nu+1}(x).$$
where $\nu>0$ and $x\in \mathbb{R}.$ This improves the Tur\'an type inequality \eqref{turan2}.

We mention that part {\bf a} of Theorem \ref{th2} can also be proved using part {\bf c} of Theorem \ref{th1} for all $x>0.$
Namely, for $\mu>\nu>-1$ and $x>0$ we have
$$\frac{\lambda'_{\nu}(x)}{\lambda_{\nu}(x)}
>\frac{\lambda'_{\mu}(x)}{\lambda_{\mu}(x)},$$
which implies that
$$\left( \frac{\lambda_{\nu}(x)}{\lambda_{\mu}(x)}\right)' =\frac{\lambda_{\mu}(x)\lambda'_{\nu}(x)-\lambda'_{\mu}(x)\lambda_{\nu}(x)}
{\lambda^2_{\mu}(x)}>0.$$

Finally, we would like to mention that by using the Weierstrassian decomposition \eqref{product2} it is
possible to deduce a Mittag-Leffler type expansion for the function $\lambda_{\nu}.$ Namely, by using the infinite product representation \eqref{product2} we have the following Mittag-Leffler expansion for all $\nu>-1$ and $x\in \mathbb{R}:$
\begin{equation}\label{mittag1}
\frac{\lambda_{\nu+1}(x)}{\lambda_{\nu}(x)}=-\frac{4(\nu+1)}{x^2-1+2\nu} + \frac{4(\nu+1)(x^2+1+2\nu)}{x^2-1+2\nu}\sum_{n\geq 1}\frac{1}{\alpha_{\nu,n}^2+x^2}.
\end{equation}
To prove the above expression, we note that
\begin{align*}
\lambda_{\nu}(x)&=2^{\nu}\Gamma(\nu+1)x^{-\nu}[I_{\nu}(x)+xI_{\nu+1}(x)]\\
\lambda'_{\nu}(x)&=2^{\nu}\Gamma(\nu+1)x^{-\nu}[xI_{\nu}(x)+(1-2\nu)I_{\nu+1}(x)],
~~\mbox{and}\\
\lambda_{\nu+1}(x)&=2^{\nu+1}\Gamma(\nu+2)x^{-\nu-1}[xI_{\nu}(x)-(2\nu+1)I_{\nu+1}(x)]
\end{align*}
Here we have used the formula \cite[p. 252]{nist} $$(x^{-\nu}I_{\nu}(x))'=x^{-\nu}I_{\nu+1}(x)$$ and the recurrence relations
\cite[p. 251]{nist} $$xI'_{\nu}(x)+\nu I_{\nu}(x)=xI_{\nu-1}(x) ~~\mbox{and}~~xI_{\nu-1}(x)-xI_{\nu+1}=2\nu I_{\nu+1}(x).$$ Combining the above equations on $\lambda_{\nu}(x),$ $\lambda'_{\nu}(x)$ and $\lambda_{\nu+1}(x)$ we obtain that
$$\lambda_{\nu+1}(x)=\frac{2(\nu+1)}{x(x^2-1+2\nu)}\left[ -2x\lambda_{\nu}(x)+(x^2+1+2\nu)\lambda'_{\nu}(x)\right],$$
which in view of \eqref{monotone} gives \eqref{mittag1}.

\subsection{Monotonicity properties of the Dini functions} Our third set of main results are some monotonicity properties for the Dini function itself, which is a special combination of Bessel functions of the first kind. For $\nu>-1$, let us define the function $\mathcal{D}_{\nu}:\mathbb{R}\rightarrow\mathbb{R}$ by
\begin{equation}\label{product3}
\mathcal{D}_{\nu}(x)=2^{\nu}\Gamma(\nu+1)x^{-\nu}d_{\nu}(x)=\prod_{n\geq 1}\left(1-\frac{x^2}{\alpha_{\nu,n}^2}\right)
\end{equation}
where $\alpha_{\nu,n}$ is the $n$th positive zero of the Dini function $d_{\nu}$. Now by the definition of $\mathcal{D}_{\nu}(x)$ we have
\begin{equation}\label{d1}
\mathcal{D}_{\nu}(x)=\mathcal{J}_{\nu}(x)-\frac{x^2}{2(\nu+1)}\mathcal{J}_{\nu+1}(x),
\end{equation}
where $\nu>-1$ and the function $\mathcal{J}_{\nu}:\mathbb{R}\rightarrow (-\infty,1]$ is defined by
$$\mathcal{J}_{\nu}(x)=2^{\nu}\Gamma(\nu+1)x^{-\nu}J_{\nu}(x)=\sum_{n\geq 0}\frac{(-1/4)^n}{(\nu+1)_{n}n!}x^{2n}.$$
By using \eqref{d1} we have the following power series for $\mathcal{D}_{\nu}(x)$,
$$\mathcal{D}_{\nu}(x)=\sum_{n\geq 0}\frac{(-1)^n(2n+1)x^{2n}}{4^nn!(\nu+1)_n}.$$
Using the above power series we get the following expression for the derivative of $\mathcal{D}_{\nu}(x)$
\begin{equation}\label{derivative3}
\mathcal{D}'_{\nu}(x)=-\frac{x}{2(\nu+1)}\mathcal{D}_{\nu+1}(x)-\frac{x}{\nu+1}\mathcal{J}_{\nu+1}(x).
\end{equation}
We also note that using the power series \eqref{series1}, we get the following expression for the derivative of $\lambda_{\nu}(x)$
\begin{equation}\label{derivative4}
\lambda'_{\nu}(x)=\frac{x}{2(\nu+1)}\lambda_{\nu+1}(x)+\frac{x}{\nu+1}\mathcal{I}_{\nu+1}(x).
\end{equation}

The next results may be proved by using some ideas from \cite[Theorem 3]{baricz1}.

\begin{theorem}\label{th3}
Let $\nu>-1$, and define $\triangle=\triangle_1\cup\triangle_2$, where
$\triangle_1=\bigcup_{n\geq 1}[-\alpha_{\nu,2n},-\alpha_{\nu,2n-1}]$ and $\triangle_2=\bigcup_{n\geq 1}[\alpha_{\nu,2n-1},\alpha_{\nu,2n}]$.
Then the following are valid
\begin{enumerate}
\item[\bf a.] The function $x\mapsto \mathcal{D}_{\nu}(x)$ is negative on $\triangle$ and strictly positive on $\mathbb{R}\setminus\triangle$;
\item[\bf b.] The function $x\mapsto \mathcal{D}_{\nu}(x)$ is increasing on $(-\alpha_{\nu,1},0]$ and decreasing on $[0,\alpha_{\nu,1})$;
\item[\bf c.] The function $x\mapsto \mathcal{D}_{\nu}(x)$ is strictly log-concave on $\mathbb{R}\setminus\triangle$;
\item[\bf d.] The function $x\mapsto d_{\nu}(x)$ is strictly log-concave on $(0,\infty)\setminus\triangle_2$, provided $\nu>0$;
\item[\bf e.] The function $\nu \mapsto \mathcal{D}_{\nu}(x)$ is increasing on $(-1,\infty)$ for all $x\in (-\alpha_{\nu,1},\alpha_{\nu,1}).$
\end{enumerate}
\end{theorem}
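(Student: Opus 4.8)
The plan is to run everything off the Weierstrassian factorization \eqref{product3}, exploiting that $\mathcal{D}_\nu$ and $d_\nu$ share the zeros $\pm\alpha_{\nu,n}$ and differ only by the positive factor $x^{\nu}/(2^{\nu}\Gamma(\nu+1))$ on $(0,\infty)$; each of the five assertions then reduces to an elementary sign analysis of the product or of its logarithmic derivatives. For part (a), fix $x\in\mathbb{R}$: since $\alpha_{\nu,n}\to\infty$, only finitely many factors $1-x^2/\alpha_{\nu,n}^2$ in \eqref{product3} are negative — exactly those with $\alpha_{\nu,n}<|x|$ — and none vanish unless $|x|=\alpha_{\nu,m}$ for some $m$, while all other factors are positive. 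Hence the sign of $\mathcal{D}_\nu(x)$ is $(-1)^{\#\{n:\,\alpha_{\nu,n}<|x|\}}$, which is negative precisely when $\alpha_{\nu,2n-1}<|x|<\alpha_{\nu,2n}$ for some $n$ and positive otherwise; using that $\mathcal{D}_\nu$ is even, this is exactly the statement about $\triangle=\triangle_1\cup\triangle_2$.

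For parts (b) and (c), note that by part (a) the set $\mathbb{R}\setminus\triangle$ is precisely $\{x:\mathcal{D}_\nu(x)>0\}$, so on it we may take logarithms; differentiating \eqref{product3} term by term (legitimate by the uniform convergence of the product on compact subsets avoiding the zeros) one checks that
\begin{equation}
(\log\mathcal{D}_\nu(x))'=-\sum_{n\geq 1}\frac{2x}{\alpha_{\nu,n}^2-x^2},\qquad
(\log\mathcal{D}_\nu(x))''=-\sum_{n\geq 1}\frac{2(\alpha_{\nu,n}^2+x^2)}{(\alpha_{\nu,n}^2-x^2)^2}.
\end{equation}
On $(-\alpha_{\nu,1},\alpha_{\nu,1})$ every denominator $\alpha_{\nu,n}^2-x^2$ is positive, so the first sum has the sign of $-x$, which together with $\mathcal{D}_\nu(0)'=0$ gives part (b). The second sum is strictly negative wherever $\mathcal{D}_\nu(x)>0$, i.e. on all of $\mathbb{R}\setminus\triangle$, which is part (c).

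Part (d) is then immediate: writing $d_\nu(x)=\tfrac{x^{\nu}}{2^{\nu}\Gamma(\nu+1)}\mathcal{D}_\nu(x)$, on $(0,\infty)\setminus\triangle_2$ (a subset of $\mathbb{R}\setminus\triangle$ on which $d_\nu>0$) we get $(\log d_\nu(x))''=-\nu/x^2+(\log\mathcal{D}_\nu(x))''<0$ once $\nu>0$, both summands being negative. For part (e), on $(-\alpha_{\nu,1},\alpha_{\nu,1})$ the function $\mathcal{D}_\nu$ is positive, and differentiating $\log\mathcal{D}_\nu$ in $\nu$ term by term yields
\begin{equation}
\frac{\partial}{\partial\nu}\log\mathcal{D}_\nu(x)=\sum_{n\geq 1}\frac{2x^2\,\alpha_{\nu,n}^{-1}\,\partial\alpha_{\nu,n}/\partial\nu}{\alpha_{\nu,n}^2-x^2}\geq 0,
\end{equation}
since $\partial\alpha_{\nu,n}/\partial\nu>0$ for $\nu>-1$ — the same fact from \cite[p. 196]{landau} already used in the proof of part (c) of Theorem \ref{th1} — and each denominator is positive; hence $\nu\mapsto\mathcal{D}_\nu(x)$ is increasing.

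The calculations I am skipping are the two derivative identities and the $\nu$-derivative above, all routine once one differentiates $\log(1-x^2/\alpha_{\nu,n}^2)$. The only point that needs care — and the one I would spell out — is the justification of the term-by-term differentiations, both in $x$ on compact subsets of $\mathbb{R}\setminus\triangle$ and in $\nu$ on $(-\alpha_{\nu,1},\alpha_{\nu,1})$, which follows from the uniform convergence of \eqref{product3} exactly as exploited earlier for $\lambda_\nu$. Conceptually the one observation that makes the theorem compact is that the positivity of $\alpha_{\nu,n}^2-x^2$ on $(-\alpha_{\nu,1},\alpha_{\nu,1})$ together with the pointwise sign count of the negative factors is what simultaneously drives the sign of $\mathcal{D}_\nu$, the monotonicity in (b), the log-concavity in (c)–(d), and the monotonicity in $\nu$ in (e).
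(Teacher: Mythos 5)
Your proposal is correct and follows essentially the same route as the paper: sign-counting the factors of the Weierstrass product for part (a), the logarithmic derivatives $(\log\mathcal{D}_\nu)'$ and $(\log\mathcal{D}_\nu)''$ for parts (b)--(c), and the $\nu$-derivative of $\log\mathcal{D}_\nu$ together with $\partial\alpha_{\nu,n}/\partial\nu>0$ for part (e). The only cosmetic difference is in part (d), where you compute $(\log d_\nu)''=-\nu/x^2+(\log\mathcal{D}_\nu)''$ directly while the paper invokes the closure of log-concavity under products applied to $x^\nu$ and $\mathcal{D}_\nu(x)$; these are equivalent.
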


\begin{proof}[\bf Proof]
{\bf a.} From the infinite product representation \eqref{product3} and the fact that $$0<\alpha_{\nu,1}<\alpha_{\nu,2}<{\cdots}<\alpha_{\nu,n}<{\cdots},$$
we see that if $x\in [\alpha_{\nu,2n-1},\alpha_{\nu,2n}]$ or $x\in [-\alpha_{\nu,2n},-\alpha_{\nu,2n-1}]$ then the first $(2n-1)$
terms of the product \eqref{product3} are negative and the remaining terms are strictly positive. Therefore $\mathcal{D}_{\nu}(x)$
becomes negative on $\triangle$. Now if $x\in (-\alpha_{\nu,1},\alpha_{\nu,1})$ then each terms of the product \eqref{product3}
are strictly positive and if $x\in (\alpha_{\nu,2n},\alpha_{\nu,2n+1})$ or $x\in (-\alpha_{\nu,2n+1},-\alpha_{\nu,2n})$, then
the first $2n$ terms are strictly negative while the rest is strictly positive. Therefore the function $\mathcal{D}_{\nu}(x)>0$
on $\mathbb{R}\setminus\triangle$.

{\bf b.} From part {\bf a} we have $\mathcal{D}_{\nu}(x)>0$ on $(-\alpha_{\nu,1},\alpha_{\nu,1})$. Therefore by taking
the logarithmic derivative of both sides of \eqref{product3}, we have
$$(\log \mathcal{D}_{\nu}(x))'=\frac{\mathcal{D}'_{\nu}(x)}{\mathcal{D}_{\nu}(x)}=-\sum_{n\geq 1}\frac{2x}{\alpha_{\nu,n}^2-x^2}.$$
From this we conclude that the function $x\mapsto \mathcal{D}_{\nu}(x)$ is increasing on $(-\alpha_{\nu,1},0]$ and decreasing on $[0,\alpha_{\nu,1})$.

{\bf c.} Differentiating both sides of the above relation with respect to $x$, we have
$$(\log \mathcal{D}_{\nu}(x))''=-\sum_{n\geq 1}\frac{2(\alpha_{\nu,n}^2+x^2)}{(\alpha_{\nu,n}^2-x^2)^2}.$$
Thus we conclude that the function $x\mapsto \mathcal{D}_{\nu}(x)$ is strictly log-concave on $\mathbb{R}\setminus\triangle$.
Note that this part has been proved also in \cite[Theorem 4]{bpogsz} but only for $x\in (0,\infty)\setminus\triangle_2$.

{\bf d.} From \eqref{product3} we have
$$d_{\nu}(x)=\frac{x^{\nu}\mathcal{D}_{\nu}(x)}{2^{\nu}\Gamma(\nu+1)}.$$
Now from part {\bf c} and using the fact that product of log-concave function is log-concave, the conclusion follows, as $x\mapsto x^{\nu}$
is log-concave on $(0,\infty)$ for all $\nu\geq 0$. Another proof can be seen in \cite[Theorem 3]{bpogsz}.

{\bf e.} We note that this part has been proved in \cite[Theorem 6]{bpogsz} for $\nu\in (-1,\infty)$ and $x\in (0,\alpha_{\nu,1})$
but because of the following expression
$$\frac{\partial}{\partial\nu}\left( \log\left(\mathcal{D}_{\nu}(x)\right)\right)
=\sum_{n\geq 1}\frac{2x^2\frac{\partial\alpha_{\nu,n}}{\partial\nu}}{\alpha_{\nu,n}(\alpha_{\nu,n}^2-x^2)},$$
the result is true for $\nu\in (-1,\infty)$ and $x\in (-\alpha_{\nu,1},\alpha_{\nu,1})$.
\end{proof}

Now, we show another result on Dini functions and modified Dini functions.

\begin{theorem}\label{th4}
Let $\nu>-1$. Then the function $x\mapsto {\lambda_{\nu}(x)}/{\mathcal{D}_{\nu}(x)}={\xi_{\nu}(x)}/{d_{\nu}(x)}$
is strictly log-convex on $(-\alpha_{\nu,1},\alpha_{\nu,1})$. Moreover, the following inequality holds for all $x,y\in(-\alpha_{\nu,1},\alpha_{\nu,1}),$
$$\frac{\lambda^2_{\nu}(\frac{x+y}{2})}{\mathcal{D}^2_{\nu}(\frac{x+y}{2})}
\leq \frac{\lambda_{\nu}(x)\lambda_{\nu}(y)}{\mathcal{D}_{\nu}(x)\mathcal{D}_{\nu}(y)},
$$
and in particular, for all $x,y\in(-\alpha_{-1/2,1},\alpha_{1/2,1})$ we have
$$\frac{\left(\cosh\left(\frac{x+y}{2}\right)+\left(\frac{x+y}{2}\right)\sinh\left(\frac{x+y}{2}\right)\right)^2}{\left(\cosh x+x\sinh x\right)\left(\cosh y+y\sinh y\right)}
\leq \frac{\left(\cos\left(\frac{x+y}{2}\right)-\left(\frac{x+y}{2}\right)\sin\left(\frac{x+y}{2}\right)\right)^2}{\left(\cos x-x\sin x\right)\left(\cos y-y\sin y\right)},$$
where $\alpha_{-1/2,1}\simeq0.8603335890{\dots}$ is the first positive root of the equation $\cos x=x\sin x.$
\end{theorem}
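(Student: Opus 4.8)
The plan is to exploit the infinite product representations \eqref{product2} and \eqref{product3} to express the quotient in a form whose logarithm is visibly convex on $(-\alpha_{\nu,1},\alpha_{\nu,1})$. First I would observe that on the symmetric interval $(-\alpha_{\nu,1},\alpha_{\nu,1})$ both $\lambda_{\nu}$ and $\mathcal{D}_{\nu}$ are strictly positive (the latter by part \textbf{a} of Theorem \ref{th3}, the former being always at least $1$), so the quotient is well defined and positive there, and moreover
$$\frac{\lambda_{\nu}(x)}{\mathcal{D}_{\nu}(x)}=\frac{\xi_{\nu}(x)}{d_{\nu}(x)}$$
since the normalizing factors $2^{\nu}\Gamma(\nu+1)x^{-\nu}$ cancel. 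Taking logarithms of the two products gives
$$\log\frac{\lambda_{\nu}(x)}{\mathcal{D}_{\nu}(x)}=\sum_{n\geq 1}\left[\log\!\left(1+\frac{x^2}{\alpha_{\nu,n}^2}\right)-\log\!\left(1-\frac{x^2}{\alpha_{\nu,n}^2}\right)\right],$$
valid for $|x|<\alpha_{\nu,1}$, where each series converges locally uniformly. Hence it suffices to show that each summand $x\mapsto g_n(x):=\log(1+x^2/\alpha_{\nu,n}^2)-\log(1-x^2/\alpha_{\nu,n}^2)$ is strictly convex on $(-\alpha_{\nu,1},\alpha_{\nu,1})$, and then the sum of (strictly) convex functions is (strictly) convex.

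The key computational step is the second derivative. Writing $a=\alpha_{\nu,n}^2$, one differentiates twice to get
$$g_n'(x)=\frac{2x}{a+x^2}+\frac{2x}{a-x^2},\qquad g_n''(x)=\frac{2(a-x^2)}{(a+x^2)^2}+\frac{2(a+x^2)}{(a-x^2)^2}.$$
On the interval in question $x^2<\alpha_{\nu,1}^2\leq a$, so $a-x^2>0$, and both terms of $g_n''(x)$ are strictly positive; hence $g_n''(x)>0$. (Equivalently one may simply add the formulas for $(\log\lambda_{\nu})''$ from the proof of Theorem \ref{th1}\textbf{b} and $-(\log\mathcal{D}_{\nu})''$ from the proof of Theorem \ref{th3}\textbf{c}, obtaining $\sum_{n\geq1}\big[2(\alpha_{\nu,n}^2-x^2)/(\alpha_{\nu,n}^2+x^2)^2+2(\alpha_{\nu,n}^2+x^2)/(\alpha_{\nu,n}^2-x^2)^2\big]>0$ for $|x|<\alpha_{\nu,1}$.) Therefore $x\mapsto\log(\lambda_{\nu}(x)/\mathcal{D}_{\nu}(x))$ is strictly convex, i.e. the quotient is strictly log-convex on $(-\alpha_{\nu,1},\alpha_{\nu,1})$.

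The inequality then follows immediately from the definition of log-convexity with weight $\alpha=\tfrac12$: for $x,y\in(-\alpha_{\nu,1},\alpha_{\nu,1})$ the midpoint $\tfrac{x+y}{2}$ lies in the same interval, and
$$\frac{\lambda_{\nu}\!\left(\frac{x+y}{2}\right)}{\mathcal{D}_{\nu}\!\left(\frac{x+y}{2}\right)}\leq\left(\frac{\lambda_{\nu}(x)}{\mathcal{D}_{\nu}(x)}\right)^{1/2}\left(\frac{\lambda_{\nu}(y)}{\mathcal{D}_{\nu}(y)}\right)^{1/2},$$
which upon squaring is exactly the displayed estimate. For the explicit special case I would take $\nu=-\tfrac12$ and insert the closed forms $\lambda_{-1/2}(x)=\cosh x+x\sinh x$ from \eqref{cosh+xsinh} and $\mathcal{D}_{-1/2}(x)=\cos x-x\sin x$ (the latter obtained from \eqref{d1} together with $\mathcal{J}_{-1/2}(x)=\cos x$ and $\mathcal{J}_{1/2}(x)=\sin x/x$, noting that $d_{-1/2}(x)=\sqrt{2/(\pi x)}(\cos x-x\sin x)$ up to the normalization), and observe that $\alpha_{-1/2,1}\simeq 0.8603\ldots$ is the least positive root of $\cos x=x\sin x$. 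Since the log-convexity already holds on $(-\alpha_{-1/2,1},\alpha_{-1/2,1})$ for $\nu=-\tfrac12$, substituting these formulas yields the claimed trigonometric–hyperbolic inequality. The only mild subtlety — and the main point to be careful about — is verifying positivity of the denominators, i.e. that the relevant zeros $\alpha_{\nu,n}$ are real, simple and ordered $0<\alpha_{\nu,1}<\alpha_{\nu,2}<\cdots$ (so that the truncated products have the claimed signs); this is exactly what the Weierstrassian factorization \eqref{product} guarantees, so no genuine obstacle arises, but it must be invoked to justify that $\mathcal{D}_{\nu}>0$ on the open interval $(-\alpha_{\nu,1},\alpha_{\nu,1})$.
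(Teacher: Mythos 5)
Your proposal is correct and follows essentially the same route as the paper: the paper combines the strict log-convexity of $\lambda_{\nu}$ (Theorem \ref{th1}\textbf{b}) with the strict log-convexity of $1/\mathcal{D}_{\nu}$ (from Theorem \ref{th3}\textbf{c}) and then applies the definition of log-convexity with $\alpha=\tfrac12$, which is exactly the computation you carry out termwise on the infinite products. Your explicit second-derivative verification of each summand is just the unpacked version of citing those two earlier results, as you yourself note in the parenthetical.
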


\begin{proof}[\bf Proof]
By using part {\bf b} of Theorem \ref{th1}, the function $x\mapsto\lambda_{\nu}(x)$ is strictly log-convex on $(-\alpha_{\nu,1},\alpha_{\nu,1})$
and by part {\bf c} of Theorem \ref{th3}  the function $x\mapsto 1/\mathcal{D}_{\nu}(x)$ is strictly log-convex on
$(-\alpha_{\nu,1},\alpha_{\nu,1})$. Therefore, the function $x\mapsto {\lambda_{\nu}(x)}/{\mathcal{D}_{\nu}(x)}={\xi_{\nu}(x)}/{d_{\nu}(x)}$
is strictly log-convex on $(-\alpha_{\nu,1},\alpha_{\nu,1})$, as product of two strictly log-convex functions. The first inequality
follows by definition of log-convexity and the other inequality follows from \eqref{cosh+xsinh} and  observing the
following special value of $\mathcal{D}_{\nu}(x)$
$$\mathcal{D}_{-1/2}(x)=\cos x - x\sin x,$$
which can be derived by using $$\mathcal{J}_{-1/2}(x)=\cos x,\ \ \mathcal{J}_{1/2}(x)=\frac{\sin x}{x}.$$
\end{proof}

\subsection{An infinitely divisible probability distribution involving Dini functions} The next result is motivated by \cite[Theorem 1.9]{ismail}. The next distributions are very natural companions to the distributions considered by Ismail and Kelker \cite{ismail}.

\begin{theorem}\label{th5}
Let $\mu >\nu>-1$. Then the function $x\mapsto \lambda_{\mu}(\sqrt{x})/\lambda_{\nu}(\sqrt{x})$ is a completely monotonic function with
$$\frac{\lambda_{\mu}(\sqrt{x})}{\lambda_{\nu}(\sqrt{x})}=\int_0^{\infty}e^{-xt}\rho(t,\nu,\mu)dt,$$
where $\rho(t,\nu,\mu)$ is a probability density function, on $(0,\infty)$, of an infinitely divisible distribution.
\end{theorem}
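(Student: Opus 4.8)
The plan is to read everything off the infinite product \eqref{product2}. Replacing $x$ by $\sqrt{x}$ there and dividing the expansions for $\mu$ and $\nu$ gives, for $x\ge 0$,
\[
\frac{\lambda_{\mu}(\sqrt{x})}{\lambda_{\nu}(\sqrt{x})}=\prod_{n\geq 1}\frac{1+x/\alpha_{\mu,n}^{2}}{1+x/\alpha_{\nu,n}^{2}}.
\]
Set $h(x)=-\log\big(\lambda_{\mu}(\sqrt{x})/\lambda_{\nu}(\sqrt{x})\big)$, so that $h(0)=0$ and, by termwise differentiation of the logarithmic series,
\[
h'(x)=\sum_{n\geq 1}\left(\frac{1}{\alpha_{\nu,n}^{2}+x}-\frac{1}{\alpha_{\mu,n}^{2}+x}\right).
\]
This series converges, since each summand is nonnegative and at most $\alpha_{\nu,n}^{-2}$, and $\sum_{n}\alpha_{\nu,n}^{-2}<\infty$ by the convergence of the product in \eqref{product2} (cf.\ \eqref{monotone}).

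The heart of the matter is that $h'$ is completely monotonic. Since $\mu>\nu$ and the map $\nu\mapsto\alpha_{\nu,n}$ is increasing on $(-1,\infty)$, as used in the proof of Theorem \ref{th1}\,(c) via \cite[p.~196]{landau}, we have $\alpha_{\nu,n}<\alpha_{\mu,n}$ for all $n$. Writing $\frac{1}{a+x}=\int_{0}^{\infty}e^{-xt}e^{-at}\,dt$, the $n$th summand equals $\int_{0}^{\infty}e^{-xt}\big(e^{-\alpha_{\nu,n}^{2}t}-e^{-\alpha_{\mu,n}^{2}t}\big)\,dt$ with a nonnegative kernel, hence is completely monotonic; summing and using monotone convergence gives
\[
h'(x)=\int_{0}^{\infty}e^{-xt}\varphi(t)\,dt,\qquad \varphi(t)=\sum_{n\geq 1}\big(e^{-\alpha_{\nu,n}^{2}t}-e^{-\alpha_{\mu,n}^{2}t}\big)\ge 0,
\]
so $h'$ is completely monotonic. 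By Lemma \ref{lem2}, $\lambda_{\mu}(\sqrt{x})/\lambda_{\nu}(\sqrt{x})=e^{-h(x)}$ is the Laplace transform of an infinitely divisible probability distribution on $(0,\infty)$; being the Laplace transform of a positive measure of total mass $1$ (the value at $x=0$), it is in particular completely monotonic and the underlying distribution is a probability distribution.

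It remains to exhibit the density. Integrating the formula for $h'$ yields the L\'evy--Khintchine exponent $h(x)=\int_{0}^{\infty}(1-e^{-xt})\,t^{-1}\varphi(t)\,dt$, whose L\'evy measure $t^{-1}\varphi(t)\,dt$ is absolutely continuous, and by the Frullani integral its total mass equals $\sum_{n\geq 1}\log(\alpha_{\mu,n}^{2}/\alpha_{\nu,n}^{2})$, which diverges because $\alpha_{\mu,n}-\alpha_{\nu,n}$ stays bounded away from $0$ while $\alpha_{\nu,n}\to\infty$ (McMahon-type asymptotics of the zeros of $d_{\nu}$). A classical absolute-continuity criterion for infinitely divisible laws then produces the density $\rho(\cdot,\nu,\mu)$. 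More concretely, each factor admits the decomposition
\[
\frac{1+x/\alpha_{\mu,n}^{2}}{1+x/\alpha_{\nu,n}^{2}}=p_{n}+(1-p_{n})\,\frac{\alpha_{\nu,n}^{2}}{\alpha_{\nu,n}^{2}+x},\qquad p_{n}=\frac{\alpha_{\nu,n}^{2}}{\alpha_{\mu,n}^{2}}\in(0,1),
\]
so it is the Laplace transform of $p_{n}\delta_{0}+(1-p_{n})\,\mathrm{Exp}(\alpha_{\nu,n}^{2})$, and our distribution is the infinite convolution of these laws; since $\sum_{n}(1-p_{n})=\sum_{n}(\alpha_{\mu,n}^{2}-\alpha_{\nu,n}^{2})/\alpha_{\mu,n}^{2}=\infty$, a Jessen--Wintner-type argument gives absolute continuity. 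This last step, upgrading infinite divisibility to the existence of a density, is the only point beyond routine manipulation of \eqref{product2}, in the spirit of Ismail and Kelker \cite{ismail}; everything else reduces to termwise complete monotonicity together with the already-recorded monotonicity of $\nu\mapsto\alpha_{\nu,n}$.
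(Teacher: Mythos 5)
Your core argument is the same as the paper's: both set $h(x)=-\log\bigl(\lambda_{\mu}(\sqrt{x})/\lambda_{\nu}(\sqrt{x})\bigr)$, compute $h'$ termwise from \eqref{product2}, use $\alpha_{\nu,n}<\alpha_{\mu,n}$ (from \cite[p.~196]{landau}) to see that each summand is completely monotonic, and then invoke $h(0)=0$ together with Lemma \ref{lem2}; the paper additionally cites \cite[Lemma 2.4]{alzer} for the complete monotonicity of $e^{-h}$, which you obtain instead from Bernstein's theorem. Where you genuinely go beyond the paper is the last step: the paper stops at ``Laplace transform of an infinitely divisible probability distribution'' and simply writes the representation with a density $\rho(t,\nu,\mu)$ without justifying that the distribution is absolutely continuous, whereas you supply an argument --- the L\'evy measure $t^{-1}\varphi(t)\,dt$ is absolutely continuous with infinite total mass (equivalently, the infinite convolution of the laws $p_n\delta_0+(1-p_n)\mathrm{Exp}(\alpha_{\nu,n}^2)$ with $\sum_n(1-p_n)=\infty$), so a Tucker/Jessen--Wintner-type criterion yields a density. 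That addendum does the work the paper implicitly delegates to the analogous computation of Ismail and Kelker \cite{ismail}, at the price of needing the asymptotics $\alpha_{\mu,n}-\alpha_{\nu,n}\to(\mu-\nu)\pi/2>0$ for the Dini zeros, which you should state (or reference) explicitly since the divergence of $\sum_n\log(\alpha_{\mu,n}^2/\alpha_{\nu,n}^2)$ hinges on it. Everything you wrote checks out.
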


An immediate consequence of Theorem \ref{th5} (taking $\mu\rightarrow\infty$) is the following corollary.

\begin{corollary}\label{cor1}
Let $\nu>-1$. Then we have
$$\frac{1}{\lambda_{\nu}(\sqrt{x})}=\int_0^{\infty}e^{-xt}\rho(t,\nu,\infty)dt,$$
where $\rho(t,\nu,\infty)$ is an infinitely divisible
probability density.
\end{corollary}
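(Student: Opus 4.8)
The plan is to let $\mu\to\infty$ in Theorem~\ref{th5}. The only computation needed at the outset is that $\lambda_{\mu}(\sqrt{x})\to 1$ as $\mu\to\infty$, for each fixed $x\geq 0$. Indeed, by the power series \eqref{series1},
$$\lambda_{\mu}(\sqrt{x})=1+\sum_{n\geq 1}\frac{(2n+1)\,x^{n}}{4^{n}\,n!\,(\mu+1)_{n}},$$
and since $(\mu+1)_{n}\geq(\mu+1)^{n}\geq 2^{n}$ once $\mu\geq 1$, the tail is dominated, uniformly in such $\mu$, by the convergent series $\sum_{n\geq 1}(2n+1)x^{n}/(8^{n}n!)$, while each individual term tends to $0$; hence $\lambda_{\mu}(\sqrt{x})\to 1$. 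Consequently, for every fixed $x>0$,
$$\int_{0}^{\infty}e^{-xt}\rho(t,\nu,\mu)\,dt=\frac{\lambda_{\mu}(\sqrt{x})}{\lambda_{\nu}(\sqrt{x})}\longrightarrow\frac{1}{\lambda_{\nu}(\sqrt{x})}\qquad(\mu\to\infty).$$

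Next I would invoke the continuity theorem for Laplace transforms (Feller~\cite{feller}): the $\rho(\cdot,\nu,\mu)$ are probability densities on $(0,\infty)$, their Laplace transforms converge pointwise on $(0,\infty)$ to $x\mapsto 1/\lambda_{\nu}(\sqrt{x})$, and this limit extends continuously to $x=0$ with value $1/\lambda_{\nu}(0)=1$; therefore the measures $\rho(t,\nu,\mu)\,dt$ converge weakly to a probability measure on $[0,\infty)$ whose Laplace transform is $1/\lambda_{\nu}(\sqrt{x})$. That $x\mapsto 1/\lambda_{\nu}(\sqrt{x})$ is a genuine Laplace transform is of course already part~\textbf{f} of Theorem~\ref{th1}, and since $1/\lambda_{\nu}(\sqrt{x})\to 0$ as $x\to\infty$ the limit law places no mass at the origin, i.e.\ it is concentrated on $(0,\infty)$. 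Infinite divisibility is then inherited, the class of infinitely divisible laws on $[0,\infty)$ being closed under weak convergence; alternatively, it follows directly from Lemma~\ref{lem2} on writing $1/\lambda_{\nu}(\sqrt{x})=e^{-h(x)}$ with $h(x)=\log\lambda_{\nu}(\sqrt{x})$, for then $h(0)=0$ and, as computed in the proof of part~\textbf{f} of Theorem~\ref{th1}, $h'(x)=\sum_{n\geq 1}(\alpha_{\nu,n}^{2}+x)^{-1}$ is completely monotonic.

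Finally, to exhibit a \emph{density} $\rho(t,\nu,\infty)$ representing this limit, I would return to the explicit formula for $\rho(t,\nu,\mu)$ produced in the proof of Theorem~\ref{th5}: these densities converge pointwise on $(0,\infty)$ to a nonnegative function $\rho(t,\nu,\infty)$, and since the total mass is preserved in the passage to the limit — which is exactly the statement that $1/\lambda_{\nu}(\sqrt{x})\to 1$ as $x\to 0^{+}$ — Scheff\'e's lemma promotes this to convergence in $L^{1}$, so that $\rho(\cdot,\nu,\infty)$ is a probability density of the limiting infinitely divisible distribution. (One may also note that this distribution is the time-one law of the driftless subordinator with L\'evy density $t\mapsto t^{-1}\sum_{n\geq 1}e^{-\alpha_{\nu,n}^{2}t}$, which has infinite total mass.) I expect the main obstacle to lie precisely in this last step: making rigorous the interchange of the limit $\mu\to\infty$ with integration and confirming that the limiting measure is absolutely continuous; the identification of the transform and the infinite divisibility are routine once $\lambda_{\mu}(\sqrt{x})\to 1$ is established.
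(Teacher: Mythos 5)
Your proposal is correct and follows the same route as the paper, which offers no proof of the corollary beyond the parenthetical instruction to let $\mu\to\infty$ in Theorem \ref{th5}; you supply the justifications (the limit $\lambda_{\mu}(\sqrt{x})\to 1$ via the power series, the continuity theorem for Laplace transforms, and the direct verification via Lemma \ref{lem2} with $h(x)=\log\lambda_{\nu}(\sqrt{x})$, $h(0)=0$ and $h'$ completely monotonic) that the paper leaves entirely implicit. The one step that outruns the available information is the Scheff\'e argument for absolute continuity, since the proof of Theorem \ref{th5} never constructs $\rho(t,\nu,\mu)$ explicitly and so pointwise convergence of the densities is not actually at hand — but the paper itself asserts the existence of these densities without proof, so on this point you are no worse off than the source, and everything the paper genuinely establishes (that $1/\lambda_{\nu}(\sqrt{x})$ is the Laplace transform of an infinitely divisible probability distribution on $(0,\infty)$) is fully covered by your Lemma \ref{lem2} argument.
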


We remark that part {\bf f} of Theorem \ref{th1} may be obtained as a consequence of this corollary.

\begin{proof}[\bf Proof of Theorem \ref{th5}]
Let us consider $h(x)=- \log \left({\lambda_{\mu}(\sqrt{x})}/{\lambda_{\nu}(\sqrt{x})}\right)$ and hence using \eqref{product2} we have
\begin{align*}
h'(x)&= \left( \log \lambda_{\nu}(\sqrt{x})\right) '- \left( \log \lambda_{\mu}(\sqrt{x})\right)'=\sum_{n\geq 1}\frac{1}{\alpha_{\nu,n}^2+x}-\sum_{n\geq 1}\frac{1}{\alpha_{\mu,n}^2+x}=\sum_{n\geq 1}\frac{\alpha_{\mu,n}^2-\alpha_{\nu,n}^2}{(\alpha_{\nu,n}^2+x)(\alpha_{\mu,n}^2+x)}.
\end{align*}
Since $\alpha_{\mu,n}>\alpha_{\nu,n} $ for $\mu>\nu$ and for each $n\in \{1,2,\ldots\}$ \cite[p. 196]{landau},
therefore each term in above series is positive and completely monotonic which implies that $x\mapsto h'(x)$ is completely monotonic
as a sum of completely monotonic functions, consequently in view of \cite[Lemma 2.4]{alzer} the function
$x\mapsto \lambda_{\mu}(\sqrt{x})/\lambda_{\nu}(\sqrt{x})$ is a completely monotonic function on $(0,\infty)$, as we required.
Now as $x\mapsto h'(x)$ is completely monotonic and from \eqref{product2}, $h(0)=0$ and hence by Lemma \ref{lem2},
$x\mapsto\lambda_{\mu}(\sqrt{x})/\lambda_{\nu}(\sqrt{x})$ is the Laplace transform of an infinitely divisible probability distribution.
\end{proof}

\subsection{\bf Redheffer-type inequalities for modified Dini functions} In this subsection we prove some Redheffer-type inequalities for modified Dini functions. Similar investigations were carried out in \cite{bwu} for Bessel functions and modified Bessel functions. Here, we will also study the monotonicity of the product of Dini function and modified Dini function.

\begin{theorem}\label{th9}
Let $\nu>-1$ and $x\in (-\alpha_{\nu,1},\alpha_{\nu,1})$. Then the modified Dini function satisfies following sharp
exponential Redheffer-type inequality
\begin{equation}\label{redheffer}
\left( \frac{\alpha^2_{\nu,1}+x^2}{\alpha^2_{\nu,1}-x^2}\right)^{a_{\nu}}\leq \lambda_{\nu}(x)
\leq \left( \frac{\alpha^2_{\nu,1}+x^2}{\alpha^2_{\nu,1}-x^2}\right)^{b_{\nu}},
\end{equation}
where $a_{\nu}=0$ and $b_{\nu}=\frac{3\alpha^2_{\nu,1}}{8(\nu+1)}$ are the best possible constants. In particular, the following exponential Redheffer type inequality is also valid
$$\left( \frac{\alpha^2_{-1/2,1}+x^2}{\alpha^2_{-1/2,1}-x^2}\right)^{a_{-1/2}}\leq \cosh(x)+x\sinh(x)
\leq \left(\frac{\alpha^2_{-1/2,1}+x^2}{\alpha^2_{-1/2,1}-x^2}\right)^{b_{-1/2}},$$
where $a_{-1/2}=0$ and $b_{-1/2}=\frac{3}{4}\alpha^2_{-1/2,1}\simeq0.5551304132{\dots}$ are the best possible constants, and $\alpha_{-1/2,1}\simeq0.8603335890{\dots}$ is the first positive root of the equation $\cos x=x\sin x.$
\end{theorem}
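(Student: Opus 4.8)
The plan is to exploit the Weierstrassian factorization \eqref{product2} to reduce the Redheffer-type inequality to a single transparent monotonicity statement. Writing
$$\lambda_\nu(x)=\prod_{n\geq 1}\left(1+\frac{x^2}{\alpha_{\nu,n}^2}\right),$$
I would first separate the leading factor corresponding to the smallest zero $\alpha_{\nu,1}$ and compare the full product against the factor $\bigl((\alpha_{\nu,1}^2+x^2)/(\alpha_{\nu,1}^2-x^2)\bigr)^{c}$. Taking logarithms and introducing the substitution $t=x^2\in(0,\alpha_{\nu,1}^2)$, the double inequality \eqref{redheffer} becomes equivalent to asserting that the function
$$\varphi_\nu(t)=\frac{\log\lambda_\nu(\sqrt{t})}{\log\!\bigl((\alpha_{\nu,1}^2+t)/(\alpha_{\nu,1}^2-t)\bigr)}$$
lies between $a_\nu$ and $b_\nu$ on $(0,\alpha_{\nu,1}^2)$. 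The standard strategy (cf. the Redheffer-type arguments in \cite{bwu}) is to show $\varphi_\nu$ is monotone; then the sharp constants are precisely its one-sided limits at the endpoints $t\to 0^+$ and $t\to\alpha_{\nu,1}^2{}^-$.

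Next I would establish the monotonicity of $\varphi_\nu$. Both numerator and denominator vanish at $t=0$, so I would apply the monotone form of l'Hôpital's rule (equivalently Lemma \ref{lem1} after expanding in power series): it suffices to show that
$$\frac{\bigl(\log\lambda_\nu(\sqrt{t})\bigr)'}{\bigl(\log((\alpha_{\nu,1}^2+t)/(\alpha_{\nu,1}^2-t))\bigr)'}$$
is monotone in $t$. Using \eqref{monotone} in the form $(\log\lambda_\nu(\sqrt{t}))'=\sum_{n\geq 1}(\alpha_{\nu,n}^2+t)^{-1}$ and the elementary identity $(\log((\alpha_{\nu,1}^2+t)/(\alpha_{\nu,1}^2-t)))'=2\alpha_{\nu,1}^2/(\alpha_{\nu,1}^4-t^2)$, the quotient becomes
$$\frac{\alpha_{\nu,1}^4-t^2}{2\alpha_{\nu,1}^2}\sum_{n\geq 1}\frac{1}{\alpha_{\nu,n}^2+t}.$$
I would check its monotonicity termwise: the $n=1$ term contributes $(\alpha_{\nu,1}^2-t)/(2\alpha_{\nu,1}^2)$, which is decreasing, and for $n\geq 2$ each term $(\alpha_{\nu,1}^4-t^2)/(2\alpha_{\nu,1}^2(\alpha_{\nu,n}^2+t))$ is also decreasing on $(0,\alpha_{\nu,1}^2)$ since $\alpha_{\nu,n}>\alpha_{\nu,1}$ (one differentiates and checks the sign, using $\alpha_{\nu,n}^2>\alpha_{\nu,1}^2\geq t$). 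Hence $\varphi_\nu$ is decreasing.

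Then the constants fall out as endpoint limits. As $t\to\alpha_{\nu,1}^2{}^-$ the denominator of $\varphi_\nu$ blows up while the numerator stays finite, so $\lim_{t\to\alpha_{\nu,1}^2{}^-}\varphi_\nu(t)=0$; since $\varphi_\nu$ is decreasing this is its infimum, giving the sharp lower constant $a_\nu=0$ (equivalently, the trivial bound $\lambda_\nu(x)\geq 1$ combined with sharpness at $x=0$ in a limiting sense). As $t\to 0^+$, a first-order expansion gives $\log\lambda_\nu(\sqrt{t})=t\sum_{n\geq 1}\alpha_{\nu,n}^{-2}+O(t^2)$ and $\log((\alpha_{\nu,1}^2+t)/(\alpha_{\nu,1}^2-t))=2t/\alpha_{\nu,1}^2+O(t^2)$, so
$$\lim_{t\to 0^+}\varphi_\nu(t)=\frac{\alpha_{\nu,1}^2}{2}\sum_{n\geq 1}\frac{1}{\alpha_{\nu,n}^2}.$$
The remaining task is to evaluate $\sum_{n\geq 1}\alpha_{\nu,n}^{-2}$: comparing the coefficient of $x^2$ in the product \eqref{product2} with that in the power series \eqref{series1}, which equals $3/(4(\nu+1))$, yields $\sum_{n\geq 1}\alpha_{\nu,n}^{-2}=3/(4(\nu+1))$, hence the limit is $3\alpha_{\nu,1}^2/(8(\nu+1))=b_\nu$. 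This is the supremum of the decreasing function $\varphi_\nu$, so $b_\nu$ is the best possible upper constant. Finally, the particular case follows by taking $\nu=-1/2$ and invoking \eqref{cosh+xsinh} together with the fact that $\alpha_{-1/2,1}$ is the first positive root of $\cos x=x\sin x$. The main obstacle is the termwise monotonicity check for $n\geq 2$ in the second paragraph; everything else is either a direct series comparison or a routine limit computation.
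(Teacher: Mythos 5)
Your argument is correct and coincides with the route the paper itself offers as its alternative proof: the monotone form of l'H\^{o}pital's rule applied to the quotient $\log\lambda_{\nu}(x)/\log\bigl((\alpha_{\nu,1}^2+x^2)/(\alpha_{\nu,1}^2-x^2)\bigr)$, whose derivative-quotient is shown to be decreasing termwise, with the sharp constants obtained as the endpoint limits via the Rayleigh-type sum $\sum_{n\geq 1}\alpha_{\nu,n}^{-2}=3/(4(\nu+1))$. The only cosmetic difference is that you work in the variable $t=x^2$ while the paper works in $x$ directly (and the paper's primary proof instead gets the lower bound from $\lambda_{\nu}(x)\geq 1$ and the upper bound by showing an auxiliary difference of logarithms is increasing), but the substance is identical.
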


\begin{proof}[\bf Proof]
Since all the functions in \eqref{redheffer} are even in $x$ it is enough to prove the result for
$x\in (0,\alpha_{\nu,1})$. From part {\bf a} of Theorem \ref{th1} the function $x\mapsto\lambda_{\nu}(x)$ is
increasing on $(0,\alpha_{\nu,1})$ for all $\nu>-1$ and hence $\lambda_{\nu}(x)\geq 1$ which gives the left-hand side of
\eqref{redheffer}. To prove the right-hand side of \eqref{redheffer}, we consider the function $f_{\nu}:(0,\alpha_{\nu,1})\mapsto \mathbb{R}$, defined by
$$f_{\nu}(x)=\frac{3\alpha^2_{\nu,1}}{8(\nu+1)}\log \left( \frac{\alpha^2_{\nu,1}+x^2}{\alpha^2_{\nu,1}-x^2}\right)-\log \lambda_{\nu}(x),$$
which in view of \eqref{monotone} and the formula \cite{bpogsz},
$$\sum_{n\geq 1}\frac{1}{\alpha^2_{\nu,n}}=\frac{3}{4(\nu+1)}$$
give
\begin{align*}
f'_{\nu}(x)&=\frac{3\alpha^2_{\nu,1}}{8(\nu+1)}.\frac{4x\alpha^2_{\nu,1}}{\alpha^4_{\nu,1}-x^4}-\sum_{n\geq 1}\frac{2x}{\alpha_{\nu,n}^2+x^2}\\
&=\frac{2x\alpha^4_{\nu,1}}{\alpha^4_{\nu,1}-x^4}\sum_{n\geq 1}\frac{1}{\alpha^2_{\nu,n}}-\sum_{n\geq 1}\frac{2x}{\alpha_{\nu,n}^2+x^2}\\
&=2x^3\sum_{n\geq 1}\frac{\alpha^4_{\nu,1}+\alpha^2_{\nu,n}x^2}{\alpha^2_{\nu,n}(\alpha^4_{\nu,1}-x^4)(\alpha^2_{\nu,n}+x^2)}.
\end{align*}
Therefore the function $f_{\nu}$ is increasing on $[0,\alpha_{\nu,1})$ for all $\nu>-1$ and hence
$f_{\nu}(x)\geq f_{\nu}(0)=0$ which implies the right-hand side of \eqref{redheffer}. Now, to prove that $a_{\nu}=0$ and
$b_{\nu}=\frac{3\alpha^2_{\nu,1}}{8(\nu+1)}$ are the best possible constants, we consider the function
$g_{\nu}:(0,\alpha_{\nu,1})\mapsto \mathbb{R}$ defined as
$$g_{\nu}(x)=\frac{\log \lambda_{\nu}(x)}{\log \left( \frac{\alpha^2_{\nu,1}+x^2}{\alpha^2_{\nu,1}-x^2}\right)}.$$
We note that $\displaystyle\lim_{x\rightarrow \alpha_{\nu,1}} g_{\nu}(x)=0=a_{\nu}$ and using the l'Hospital rule we have,
$$\lim_{x\rightarrow 0}g_{\nu}(x)=\lim_{x\rightarrow 0}\frac{\lambda'_{\nu}(x)}{\lambda_{\nu}(x)}.\frac{\alpha^4_{\nu,1}-x^4}{4x\alpha^2_{\nu,1}}
=\lim_{x\rightarrow 0}\sum_{n\geq 1}\frac{2x}{\alpha_{\nu,n}^2+x^2}.\frac{\alpha^4_{\nu,1}-x^4}{4x\alpha^2_{\nu,1}}=b_{\nu}.$$
Therefore $a_{\nu}=0$ and $b_{\nu}=\frac{3\alpha^2_{\nu,1}}{8(\nu+1)}$ are indeed the best possible constants. Alternatively,
inequality \eqref{redheffer} can be proved using the monotone form of l'Hospital's rule \cite[Lemma 2.2]{anderson}.
Namely, it is enough to observe that
$$x\mapsto \frac{\frac{d}{dx}\log \lambda_{\nu}(x)}{\frac{d}{dx}\log \left( \frac{\alpha^2_{\nu,1}+x^2}{\alpha^2_{\nu,1}-x^2}\right)}
=\frac{1}{2\alpha^2_{\nu,1}}\sum_{n\geq 1}\frac{\alpha^4_{\nu,1}-x^4}{\alpha_{\nu,n}^2+x^2}
$$
is decreasing on $(0,\alpha_{\nu,1})$ as each terms in the above series are decreasing. Therefore $g_{\nu}$ is
decreasing too on $(0,\alpha_{\nu,1})$ and hence $$a_{\nu}=\lim_{x\to\alpha_{\nu,1}}g_{\nu}(x) < g_{\nu}(x) < \lim_{x\to 0}g_{\nu}(x)=b_{\nu},$$
which gives the inequality \eqref{redheffer}.
\end{proof}

We continue with another result on Dini and modified Dini functions.

\begin{theorem}\label{th10}
Let $\nu>-1.$ The following assertions are valid:
\begin{enumerate}
\item[\bf a.] The function $x\mapsto \mathcal{D}_{\nu}(x)\lambda_{\nu}(x)$ is increasing on $(-\alpha_{\nu,1},0]$ and
decreasing on $[0,\alpha_{\nu,1})$;
\item[\bf b.] The function $\nu \mapsto \mathcal{D}_{\nu}(x)\lambda_{\nu}(x)$ is increasing on $(-1,\infty)$ for all
$x\in (-\alpha_{\nu,1},\alpha_{\nu,1})$ fixed;
\item[\bf c.] The following inequalities hold:
$$0< \mathcal{D}_{\nu}(x)\lambda_{\nu}(x)< \mathcal{D}_{\nu+1}(x)\lambda_{\nu+1}(x) < 1,$$
for all $x\in (-\alpha_{\nu,1},\alpha_{\nu,1})$ and $\nu>-1$.
\end{enumerate}
\end{theorem}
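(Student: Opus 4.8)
The plan is to reduce the whole theorem to a single product identity, obtained by multiplying the Weierstrassian factorizations \eqref{product2} and \eqref{product3}: for $\nu>-1$ and $z\in\mathbb{C}$,
\begin{equation*}
\mathcal{D}_{\nu}(z)\lambda_{\nu}(z)=\prod_{n\geq 1}\left(1-\frac{z^2}{\alpha_{\nu,n}^2}\right)\left(1+\frac{z^2}{\alpha_{\nu,n}^2}\right)=\prod_{n\geq 1}\left(1-\frac{z^4}{\alpha_{\nu,n}^4}\right),
\end{equation*}
the last product being uniformly convergent on compact subsets of $\mathbb{C}$, being the product of two such. The key observation is that for $x\in(-\alpha_{\nu,1},\alpha_{\nu,1})$ every factor lies in $(0,1]$, since $x^4<\alpha_{\nu,1}^4\leq\alpha_{\nu,n}^4$; hence $\mathcal{D}_{\nu}(x)\lambda_{\nu}(x)>0$ on that interval (this already yields the leftmost inequality of part {\bf c}) and logarithmic differentiation of the product is legitimate there.

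For part {\bf a}, I would differentiate $\log[\mathcal{D}_{\nu}(x)\lambda_{\nu}(x)]$ in $x$, getting
\begin{equation*}
\bigl(\log[\mathcal{D}_{\nu}(x)\lambda_{\nu}(x)]\bigr)'=-\sum_{n\geq 1}\frac{4x^3}{\alpha_{\nu,n}^4-x^4},
\end{equation*}
all of whose denominators are positive on $(-\alpha_{\nu,1},\alpha_{\nu,1})$, so the sum has the sign of $-x$. Hence $x\mapsto\mathcal{D}_{\nu}(x)\lambda_{\nu}(x)$ is increasing on $(-\alpha_{\nu,1},0]$ and decreasing on $[0,\alpha_{\nu,1})$. (Note that one cannot conclude this from parts {\bf a} of Theorem \ref{th1} and {\bf b} of Theorem \ref{th3} directly, since on $(-\alpha_{\nu,1},0]$ one factor is increasing while the other is decreasing.)

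For part {\bf b}, I would differentiate $\log[\mathcal{D}_{\nu}(x)\lambda_{\nu}(x)]$ in $\nu$; after simplification,
\begin{equation*}
\frac{\partial}{\partial\nu}\log[\mathcal{D}_{\nu}(x)\lambda_{\nu}(x)]=\sum_{n\geq 1}\frac{4x^4}{\alpha_{\nu,n}(\alpha_{\nu,n}^4-x^4)}\frac{\partial\alpha_{\nu,n}}{\partial\nu}.
\end{equation*}
Since $\partial\alpha_{\nu,n}/\partial\nu>0$ for $\nu>-1$ by \cite[p. 196]{landau} (the same fact used in Theorems \ref{th1} and \ref{th3}) and the other factors are positive on the interval, the right-hand side is positive, so $\nu\mapsto\mathcal{D}_{\nu}(x)\lambda_{\nu}(x)$ is increasing on the range of $\nu$ for which $x\in(-\alpha_{\nu,1},\alpha_{\nu,1})$; here one uses that $\nu\mapsto\alpha_{\nu,1}$ is increasing, so a fixed admissible $x$ stays admissible for all larger parameters.

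Finally, part {\bf c} is assembled from the pieces: positivity from the first paragraph; the middle inequality $\mathcal{D}_{\nu}(x)\lambda_{\nu}(x)<\mathcal{D}_{\nu+1}(x)\lambda_{\nu+1}(x)$ from part {\bf b} applied to the pair $\nu,\nu+1$ (legitimate because $\alpha_{\nu,1}<\alpha_{\nu+1,1}$); and the rightmost inequality read off the identity, $\mathcal{D}_{\nu+1}(x)\lambda_{\nu+1}(x)=\prod_{n\geq1}(1-x^4/\alpha_{\nu+1,n}^4)$ being a product of factors in $(0,1]$, hence $\leq 1$, and $<1$ once $x\neq0$. There is no real obstacle here; the only points requiring care — which should be acknowledged — are that at $x=0$ all three quantities in part {\bf c} equal $1$, so the strict inequalities there must be understood for $x\neq0$, and the mild bookkeeping about the $\nu$-dependence of the endpoints $\pm\alpha_{\nu,1}$. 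Modulo these, the entire theorem is a corollary of the factorization $\mathcal{D}_{\nu}(x)\lambda_{\nu}(x)=\prod_{n\geq1}(1-x^4/\alpha_{\nu,n}^4)$.
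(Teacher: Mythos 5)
Your proposal is correct and follows essentially the same route as the paper: both rest on the combined factorization $\mathcal{D}_{\nu}(x)\lambda_{\nu}(x)=\prod_{n\geq1}\left(1-x^4/\alpha_{\nu,n}^4\right)$, its logarithmic derivative in $x$ for part {\bf a}, the monotonicity of $\nu\mapsto\alpha_{\nu,n}$ for part {\bf b}, and assembly for part {\bf c} (the paper additionally records an alternative proof of part {\bf a} via the recurrences \eqref{derivative3}--\eqref{derivative4} and zero interlacing, which you do not need). Your explicit caveat that the strict inequalities in part {\bf c} degenerate to equalities at $x=0$ is a point the paper glosses over.
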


\begin{proof}[\bf Proof]
{\bf a.} Using \eqref{product2} and \eqref{product3} we get,
\begin{align*}
\left[\log(\mathcal{D}_{\nu}(x)\lambda_{\nu}(x))\right]'
=\frac{\mathcal{D}'_{\nu}(x)}{\mathcal{D}_{\nu}(x)}+\frac{\lambda'_{\nu}(x)}{\lambda_{\nu}(x)}
=-\sum_{n\geq 1}\frac{2x}{\alpha_{\nu,n}^2-x^2}+\sum_{n\geq 1}\frac{2x}{\alpha_{\nu,n}^2+x^2}
=-\sum_{n\geq 1}\frac{4x^3}{(\alpha_{\nu,n}^2-x^2)(\alpha_{\nu,n}^2+x^2)}
\end{align*}
where $\nu>-1$ and $x\in (-\alpha_{\nu,1},\alpha_{\nu,1})$. Therefore from above expression, the conclusion follows.

Alternatively, this part can be proved as follows. Using \eqref{derivative3} and \eqref{derivative4} we have
\begin{eqnarray*}
\left(\mathcal{D}_{\nu}(x)\lambda_{\nu}(x)\right)'
&=&\mathcal{D}_{\nu}(x)\lambda'_{\nu}(x)+\mathcal{D}'_{\nu}(x)\lambda_{\nu}(x)\\
&=&\mathcal{D}_{\nu}(x)\left[ \frac{x}{2(\nu+1)}\lambda_{\nu+1}(x)
+\frac{x}{\nu+1}\mathcal{I}_{\nu+1}(x)\right] \\
&&+ \left[ -\frac{x}{2(\nu+1)}\mathcal{D}_{\nu+1}(x)-
\frac{x}{\nu+1}\mathcal{J}_{\nu+1}(x)\right]\lambda_{\nu}(x)\\
&=&\frac{x}{2(\nu+1)}\left[\mathcal{D}_{\nu}(x)\lambda_{\nu+1}(x)
-\mathcal{D}_{\nu+1}(x)\lambda_{\nu}(x)\right]
+\frac{x}{\nu+1}\left[\mathcal{D}_{\nu}(x)\mathcal{I}_{\nu+1}(x)
-\mathcal{J}_{\nu+1}(x)\lambda_{\nu}(x)\right]
\end{eqnarray*}
From the above expression it is enough to prove that $$\mathcal{D}_{\nu}(x)\lambda_{\nu+1}(x)-\mathcal{D}_{\nu+1}(x)\lambda_{\nu}(x)<0
~~\mbox{and}~~ \mathcal{D}_{\nu}(x)\mathcal{I}_{\nu+1}(x)-\mathcal{J}_{\nu+1}(x)\lambda_{\nu}(x)<0$$ for all $\nu>-1$ and
$x\in [0,\alpha_{\nu,1})$. Now as the function $\nu \mapsto \mathcal{D}_{\nu}(x)$ is increasing on $(-1,\infty)$
for each fixed $x\in (-\alpha_{\nu,1},\alpha_{\nu,1})$ and the function $\nu\mapsto\lambda_{\nu}(x)$ is decreasing on $(-1,\infty)$
for all $x\in \mathbb{R}$ fixed, we have
$$\frac{\lambda_{\nu+1}(x)}{\lambda_{\nu}(x)}\leq 1\leq \frac{\mathcal{D}_{\nu+1}(x)}{\mathcal{D}_{\nu}(x)},
$$
and hence $\mathcal{D}_{\nu}(x)\lambda_{\nu+1}(x)-\mathcal{D}_{\nu+1}(x)\lambda_{\nu}(x)<0$ for all $\nu>-1$ and
$x\in [0,\alpha_{\nu,1})$. Alternatively, in view of the infinite product representation \eqref{product2}
and \eqref{product3}, it is enough to show the following inequality for all $\nu>-1$, $n\in \{1,2,\ldots\}$ and $x\in (-\alpha_{\nu,1},\alpha_{\nu,1}):$
$$\left(1-\frac{x^2}{\alpha_{\nu,n}^2}\right)
\left(1+\frac{x^2}{\alpha_{\nu+1,n}^2}\right)\leq \left(1-\frac{x^2}{\alpha_{\nu+1,n}^2}\right)
\left(1+\frac{x^2}{\alpha_{\nu,n}^2}\right),
$$
which is indeed true. Here we used the fact \cite[p. 196]{landau}, $\alpha_{\nu,n}<\alpha_{\nu+1,n}$ holds for all $\nu>-1$
and $n\in \{1,2,\ldots\}$. Now to prove the second inequality, recall the infinite product representation of
$\mathcal{J}_{\nu}(z)$ and $\mathcal{I}_{\nu}(z)$ \cite{watson}, namely
$$\mathcal{J}_{\nu}(z)=\prod_{n\geq 1}\left(1-\frac{z^2}{j_{\nu,n}^2}\right),\ \ \ \mathcal{I}_{\nu}(z)=\prod_{n\geq 1}\left(1+\frac{z^2}{j_{\nu,n}^2}\right),$$
where $j_{\nu,n}$ is the $n$th positive zero of the Bessel function $J_{\nu}$. In view of the above infinite product representations \eqref{product2} and \eqref{product3}, it is enough to show the following inequality for all $\nu>-1$, $n\in \{1,2,\ldots\}$ and $x\in (-\alpha_{\nu,1},\alpha_{\nu,1}):$
$$\left(1-\frac{x^2}{\alpha_{\nu,n}^2}\right)\left(1+\frac{x^2}{j_{\nu+1,n}^2}\right)
\leq \left(1-\frac{x^2}{j_{\nu+1,n}^2}\right)\left(1+\frac{x^2}{\alpha_{\nu,n}^2}\right);
$$
that is, $\alpha^2_{\nu,n}<j^2_{\nu+1,n}$, which is indeed true because of the following inequality
\begin{equation}\label{interlace1}
\alpha_{\nu,n}<\alpha_{\nu+1,n}<j_{\nu+1,n}.
\end{equation}
The first inequality in \eqref{interlace1} follows from the monotonicity of $\nu\mapsto \alpha_{\nu,n}$ \cite[p. 196]{landau},
and second inequality follows from Dixon's theorem \cite[p.480]{watson}, which says that when $\nu>-1$ and $a,b,c,d$ are
constants such that $ad\neq bc$, then the positive zeros of $x\mapsto aJ_{\nu}(x)+bxJ'_{\nu}(x)$ are interlaced with those of
$x\mapsto cJ_{\nu}(x)+dxJ'_{\nu}(x)$. Therefore if we choose $a=1-\nu$, $b=c=1$ and $d=0$ then for $\nu>-1$ we have,
$j_{\nu-1,n}<\alpha_{\nu,n}<j_{\nu,n},$ $n\geq 2,$
and for $n=1$, $0<\alpha_{\nu,1}<j_{\nu,1}$.

{\bf b.} Since $\nu\mapsto \alpha_{\nu,n}$ is increasing on $(-1,\infty)$ for each $n\in \{1,2,\ldots\}$, it follows that the function $\nu\mapsto\log(1-x^4/\alpha_{\nu,n})$ is increasing on $(-1,\infty)$ for each $n\in \{1,2,\ldots\}$
and $x\in (-\alpha_{\nu,1},\alpha_{\nu,1})$ fixed. Again using the infinite products \eqref{product2} and \eqref{product3},
the function
$$\nu\mapsto\log[\mathcal{D}_{\nu}(x)\lambda_{\nu}(x)]=\sum_{n\geq 1}\log\left(1-\frac{x^4}{\alpha_{\nu,n}^4}\right)$$
is  increasing on $(-1,\infty)$ for each $x\in (-\alpha_{\nu,1},\alpha_{\nu,1})$ fixed and hence the conclusion follows.

{\bf c.} This is an immediate consequence of parts {\bf a} and {\bf b} of this theorem.
\end{proof}

\subsection{\bf Bounds for Bessel and modified Bessel functions} It is important to mention here that by using a similar approach as in \cite[Remark C]{bps} we can find bounds for Dini and modified Dini functions in terms of Bessel and modified Bessel functions, which in turn give bounds for ratios of Bessel and modified Bessel functions. Namely, by Dixon's theorem \cite[p. 480]{watson} for all $n\geq 2$ and $\nu>-1$ we have $j_{\nu,n-1}<\alpha_{\nu,n}<j_{\nu,n}$, where $j_{\nu,n}$ is the $n$th positive zero of Bessel function $J_{\nu}$. Therefore by these inequalities for $\nu>-1$ and $x\in\mathbb{R}$ we have,
$$\prod_{n\geq 2}\left(1+\frac{x^2}{j_{\nu,n}^2}\right) < \prod_{n\geq 2}\left(1+\frac{x^2}{\alpha_{\nu,n}^2}\right)
< \prod_{n\geq 2}\left(1+\frac{x^2}{j_{\nu,n-1}^2}\right),
$$
which in view of \eqref{product1} and the infinite product representation of the modified Bessel function $I_{\nu}$ implies
$$\frac{j^2_{\nu,1}}{\alpha^2_{\nu,1}}.\frac{\alpha^2_{\nu,1}+x^2}{j^2_{\nu,1}+x^2}I_{\nu}(x) < \xi_{\nu}(x)
< \frac{\alpha^2_{\nu,1}+x^2}{\alpha^2_{\nu,1}}I_{\nu}(x).$$
Now, using the definition of $\xi_{\nu}(x)$ and the fact that $\alpha_{\nu,1} < j_{\nu,1}$ (see \cite{ismail1}) the above inequlaity gives the
following inequalities for all $\nu>-1$ and $x>0$
$$0 < \frac{x}{\alpha^2_{\nu,1}}.\frac{j^2_{\nu,1}-\alpha^2_{\nu,1}}{j^2_{\nu,1}+x^2} < \frac{I_{\nu+1}(x)}{I_{\nu}(x)}
< \frac{x}{\alpha^2_{\nu,1}}.$$
Using the formula
$$\mathcal{I}'_{\nu}(x)=2^{\nu}\Gamma(\nu+1)(x^{-\nu}I_{\nu}(x))'
=2^{\nu}\Gamma(\nu+1)x^{-\nu}I_{\nu+1}(x),$$
the above inequality is equivalent to
\begin{equation}\label{bound3}
\frac{t}{\alpha^2_{\nu,1}}.\frac{j^2_{\nu,1}-\alpha^2_{\nu,1}}{j^2_{\nu,1}+t^2} < \frac{\mathcal{I}'_{\nu}(t)}{\mathcal{I}_{\nu}(t)}
< \frac{t}{\alpha^2_{\nu,1}},
\end{equation}
where $t>0$ and $\nu>-1.$ Integrating \eqref{bound3} we obtain
$$\int_0^x\frac{j^2_{\nu,1}-\alpha^2_{\nu,1}}{2\alpha^2_{\nu,1}}(\log (j^2_{\nu,1}+t^2))'dt
< \int_0^x\left( \log \mathcal{I}_{\nu}(t)\right)'dt < \int_0^x\left( \frac{t^2}{2\alpha^2_{\nu,1}}\right)'dt,$$
which implies that
\begin{equation}\label{bound4}
\left(1+\frac{x^2}{j^2_{\nu,1}}\right)^{\frac{j^2_{\nu,1}-\alpha^2_{\nu,1}}{2\alpha^2_{\nu,1}}} < \mathcal{I}_{\nu}(x)
< e^{\frac{x^2}{2\alpha^2_{\nu,1}}}.
\end{equation}
Here we have used the fact that $\mathcal{I}_{\nu}(0)=1$. We also note that when $x=0$, the above
inequality \eqref{bound4} is sharp. The left-hand side of the inequality \eqref{bound4} is stronger than the
inequality $\mathcal{I}_{\nu}(x)>1$ for all $x>0$ and $\nu>-1/2,$ given by Luke \cite{luke}, while using the inequality (see \cite{ismail1}) $\alpha^2_{\nu,1} < 2(\nu+1)$, we conclude that the right-hand side of the inequality \eqref{bound4} is
weaker than the existing inequality $$I_{\nu}(x) < \frac{x^{\nu}}{2^{\nu}\Gamma(\nu+1)}e^{\frac{x^2}{4(\nu+1)}}$$ for
all $x>0$ and $\nu>-1,$ given in \cite{baricz3}.

Again integrating \eqref{bound3} over $0<x<y$, we have
$$\int_x^y\frac{j^2_{\nu,1}-\alpha^2_{\nu,1}}{2\alpha^2_{\nu,1}}(\log (j^2_{\nu,1}+t^2))'dt
< \int_x^y\left( \log \mathcal{I}_{\nu}(t)\right)'dt < \int_x^y\left( \frac{t^2}{2\alpha^2_{\nu,1}}\right)'dt
$$
and
\begin{equation}\label{bound5}
\left(\frac{x}{y}\right)^{\nu}e^{\frac{x^2-y^2}{2\alpha^2_{\nu,1}}} < \frac{I_{\nu}(x)}{I_{\nu}(y)}
< \left(\frac{x}{y}\right)^{\nu}\left(\frac{j^2_{\nu,1}+x^2}{j^2_{\nu,1}+y^2}\right)^{\frac{j^2_{\nu,1}-\alpha^2_{\nu,1}}{2\alpha^2_{\nu,1}}}.
\end{equation}
In view of the inequality (see \cite{ismail1}) $\alpha^2_{\nu,1} < 2(\nu+1)$ the left-hand side of \eqref{bound5} improves the
following inequality given by Joshi and Bissu \cite{joshi}
$$\left(\frac{x}{y}\right)^{\nu}e^{\frac{x^2-y^2}{4(\nu +1)}} < \frac{I_{\nu}(x)}{I_{\nu}(y)},$$
where $\nu>-1$ and $0<x<y.$

Next we find a bound for Dini functions in terms of Bessel functions, which in turns gives a bound for the ratio
${J_{\nu+1}(x)}/{J_{\nu}(x)}$. For $\nu>-1$, using the inequalities $j_{\nu,n-1}<\alpha_{\nu,n}<j_{\nu,n}$,
$n\geq 2$ and $0<\alpha_{\nu,1}<j_{\nu,1}$, we have for all $x\in (-\alpha_{\nu,1},\alpha_{\nu,1})$,
$$\prod_{n\geq 2}\left(1-\frac{x^2}{j_{\nu,n-1}^2}\right) < \prod_{n\geq 2}\left(1-\frac{x^2}{\alpha_{\nu,n}^2}\right)
< \prod_{n\geq 2}\left(1-\frac{x^2}{j_{\nu,n}^2}\right),
$$
which in view of \eqref{product} and the infinite product representation of the Bessel functions of the first kind implies
\begin{equation}\label{bound6}
\frac{\alpha^2_{\nu,1}-x^2}{\alpha^2_{\nu,1}}J_{\nu}(x) < d_{\nu}(x)
< \frac{j^2_{\nu,1}}{\alpha^2_{\nu,1}}.\frac{\alpha^2_{\nu,1}-x^2}{j^2_{\nu,1}-x^2}J_{\nu}(x)
\end{equation}
Now, using the definition of $d_{\nu}(x)$ and the fact that $0<\alpha_{\nu,1} < j_{\nu,1}$, \eqref{bound6} gives the
following inequality for all $\nu>-1$ and $x\in (0,\alpha_{\nu,1})$
\begin{equation}\label{bound7}
0 < \frac{x}{\alpha^2_{\nu,1}}.\frac{j^2_{\nu,1}-\alpha^2_{\nu,1}}{j^2_{\nu,1}-x^2} < \frac{J_{\nu+1}(x)}{J_{\nu}(x)}
< \frac{x}{\alpha^2_{\nu,1}}.
\end{equation}
In view of the formula $$\mathcal{J}'_{\nu}(x)=2^{\nu}\Gamma(\nu+1)(x^{-\nu}J_{\nu}(x))' =-2^{\nu}\Gamma(\nu+1)x^{-\nu}J_{\nu+1}(x),$$
the above inequality \eqref{bound7} implies
\begin{equation}\label{bound8}
\frac{t}{\alpha^2_{\nu,1}}.\frac{j^2_{\nu,1}-\alpha^2_{\nu,1}}{j^2_{\nu,1}-t^2}
< -\frac{\mathcal{J}'_{\nu}(t)}{\mathcal{J}_{\nu}(t)} < \frac{t}{\alpha^2_{\nu,1}}.
\end{equation}
Integrating \eqref{bound8} as above, we have for all $\nu>-1$ and $0<x<y<\alpha_{\nu,1}$
$$e^{-\frac{x^2}{2\alpha^2_{\nu,1}}} < \mathcal{J}_{\nu}(x)
< \left(1-\frac{x^2}{j^2_{\nu,1}}\right)^{\frac{j^2_{\nu,1}-\alpha^2_{\nu,1}}{2\alpha^2_{\nu,1}}}.$$
and
$$\left(\frac{x}{y}\right)^{\nu}\left(\frac{j^2_{\nu,1}-x^2}{j^2_{\nu,1}-y^2}\right)^{\frac{j^2_{\nu,1}-\alpha^2_{\nu,1}}{2\alpha^2_{\nu,1}}}
< \frac{J_{\nu}(x)}{J_{\nu}(y)} < \left(\frac{x}{y}\right)^{\nu}e^{-\frac{x^2-y^2}{2\alpha^2_{\nu,1}}}.$$

\subsection*{Acknowledgments} The work of \'A. Baricz was supported by the J\'anos Bolyai Research Scholarship of
the Hungarian Academy of Sciences. The second author is on leave from the Department of Mathematics,
Indian Institute of Technology Madras, Chennai-600 036, India. The research of S. Singh was supported by the
fellowship of the University Grants Commission, India.

\end{document}